\newtheorem{theorem}{Theorem}[section]
\newtheorem{lemma}[theorem]{Lemma}
\newtheorem{proposition}[theorem]{Proposition}
\theoremstyle{definition}
\newtheorem{definition}[theorem]{Definition}
\theoremstyle{remark}
\newtheorem{remark}[theorem]{Remark}
\newtheorem{example}[theorem]{Example}
\begin{document} 

\title[The Milnor $\bar{\mu}$ invariants and nanophrases]{The Milnor $\bar{\mu}$ invariants and nanophrases}

\author[Yuka Kotorii]{Yuka Kotorii}

\address{
Department of Mathematics \\
Tokyo Institute of Technology \\
Oh-okayama \\
Meguro \\
Tokyo 152-8551 \\
Japan
}

\email{kotorii.y.aa@m.titech.ac.jp}

\subjclass[2000]{Primary 57M99; Secondary 68R15}

\keywords{nanowords, nanophrases, homotopy, Milnor $\bar{\mu}$ invariants}

\thanks{}

\begin{abstract} 
Two link diagrams are link homotopic if one can be transformed into the other by a sequence of Reidemeister moves and self crossing changes.
Milnor introduced invariants under link homotopy called $\bar{\mu}$. 
Nanophrases, introduced by Turaev, generalize links.   
In this paper, we extend the notion of link homotopy to nanophrases.
We also generalize $\bar{\mu}$ to the set of those nanophrases that correspond to virtual links.    
 
\end{abstract} 

\date{\today}

\maketitle

\section{Introduction}
A word is a sequence of symbols, called letters, belonging to a given set $\mathcal{A}$, called alphabet.  
Turaev developed a theory of words based on the analogy with curves on the plane, knots in the 3-sphere, virtual knots, etc in \cite{MR2195447,MR2276346,MR2295606,MR2352565}.  

A Gauss word is a sequence of letters with the condition that any letter appearing in the sequence does so exactly twice.
A Gauss word can be obtained from an oriented virtual knot diagram introduced by Kauffman in \cite{MR1721925}. 
Given a diagram, label the real crossings and pick a base point on the curve somewhere away from the real crossings. 
Starting from the base point, we follow the curve and read off the labels of the crossings as we pass through them. 
When we return to the base point, we will have a sequence of letters in which each label of a real crossing appears exactly twice. 
Thus this sequence is a Gauss word. 
It is natural to introduce combinatorial moves on Gauss words, based on Reidemeister moves on knot diagrams. 
The equivalence relation generated by these moves is called homotopy.
We then decorate each letter with information from the diagram.
This leads us to the notion of nanoword as introduced by Turaev in \cite{MR2352565}.  
By introducing combinatorial moves on nanowords, the notion of homotopy can be defined for them also. 
From this viewpoint, homotopy of Gauss words is the simplest kind of nanoword homotopy. 
In fact, Gauss words underlying homotopic nanowords are homotopic as Gauss words.

The theory of nanowords can be naturally generalized to the theory of nanophrases just like knot theory does to link theory. 
The purpose of this paper is to develop a weaker homotopy theory of nanophrases,
called $M$-homotopy, which is an analogue of Milnor's link homotopy \cite{MR0071020,MR0092150}.  
By a link homotopy we mean a deformation of one link into another, during which each component of the link is allowed to cross itself,
but no two components are allowed to intersect.
Milnor introduced invariants under link homotopy called $\bar{\mu}$ in \cite{MR0071020,MR0092150}.    
We introduce a self crossing move on nanophrases and the associated $M$-homotopy allowing self crossings. 
The main result stated in Theorem \ref{m.them} establishes an $M$-homotopy invariant of nanophrases corresponding to virtual links
\footnote{The theory of nanophrases contains virtual link theory as a special case. 
Nanophrase theory is more general in that Turaev considered a general notion of "decoration", not just the case inspired by link diagrams.} 
as an extension of Milnor's $\bar{\mu}$ invariants.
Dye and Kauffman have done a similar work in \cite{MR2673693}. 
We will come back to their results in Remark \ref{counter}. 
Also, Kravchenko and Polyak defined an extension of Milnor's $\mu$ invariants to virtual tangles in \cite{MR2775128}.
While $\bar{\mu}$ is an invariant of virtual links, $\mu$ can be viewed as an invariant of virtual string links, but not as an invariant of  virtual links.
There is a natural surjection from virtual string links to virtual links. 
In that sense Kravchenko and Polyak's extension coincides with ours in the case of virtual string links, modulo lower degree invariants.  
However our proof of invariance is considerably different from theirs.

The paper is organized as follows.
In section 2, following Turaev, we give formal definitions of words, phrases and so on.
In section 3, we develop the $M$-homotopy theory for nanophrases and define invariants analogous to Milnor's $\bar{\mu}$. 
In section 4, we compute some examples for $\bar{\mu}$.
In sections 5, 6 and 7, we prove invariance under $M$-homotopy corresponding to self-crossing moves on virtual links.  
In section 8, we show that $\bar{\mu}$ is also an invariant under another $M$-homotopy corresponding to self-crossing moves on welded links.

\section{Nanowords and nanophrases}
In this section, following Turaev \cite{MR2195447,MR2276346,MR2295606,MR2352565}, we review formal definitions of words, phrases and so on. 

\subsection{Words and phrases}
An {\it alphabet} is a finite set, whose element is called {\it letter}. 
Let $\mathcal{A}$ be an alphabet. 
For any positive integer $m$, let $[m]$ denote the set $\{ 1, 2, \ldots, m\}$.  
Then, a {\it word} on $\mathcal{A}$ of length $m$ is a map
\[ w: [m] \to \mathcal{A}. \]
Simply, we may think of a word $w$ on $\mathcal{A}$ as a finite sequence of letters in $\mathcal{A}$ 
and we will usually write words in this way.  
For example, $ABA$ is a word of length $3$ on $\{ A,B,C \}$, 
where 1 is mapped to $A$, 2 to $B$ and 3 to $A$.
By convention, the empty word of length 0 on any alphabet is denoted by $\emptyset$.  

The {\it concatenation} of two words is defined by writing down the first word and then the second one.  
For example, the concatenation of the words $w = ABC$ and $v = DBBA$ is the word $wv = ABCDBBA$.  

An {\it phrase} on $\mathcal{A}$ is a sequence of words on $\mathcal{A}$.
If the number of component of the phrase is $n$, we call the phrase $n$-component phrase.  
We write phrases as a sequence of words which is separated by `$|$'.  
For example, $A|ABC|C|D$ is a 4-component phrase on $\{ A,B,C,D \}$.  
There is a unique phrase with 0 component which we denote by $\emptyset_P$.  
In this paper, we will regard words as 1-component phrases.

\subsection{Nanowords and nanophrases}
Let $\alpha$ be a finite set.  
An $\alpha$-alphabet is an alphabet $\mathcal{A}$ together with an associated map from $\mathcal{A}$ to $\alpha$.  
The map is called a {\it projection}.  
The image of any $A \in \mathcal{A}$ in $\alpha$ will be denoted by $|A|$.
An {\it isomorphism} of $\alpha$-alphabets, $\mathcal{A}_1$ and $\mathcal{A}_2$, is a bijective map  
$f$ from $\mathcal{A}_1$ to $\mathcal{A}_2$ such that $|f(A)|$ is equal to $|A|$ for any letter $A$ in $\mathcal{A}_1$.

A {\it Gauss word}  on $\mathcal{A}$ is a word on $\mathcal{A}$ such that each letter 
in $\mathcal{A}$ appears exactly twice.  
Similarly, a Gauss phrase on $\mathcal{A}$ is a phrase on $\mathcal{A}$ such that the concatenation of the words 
appearing in the phrase is a Gauss word on $\mathcal{A}$. 
By the definition, we can think that a 1-component Gauss phrase is a Gauss word.

A {\it nanoword} over $\alpha$ is a pair $(\mathcal{A}, w)$,  
where the alphabet $\mathcal{A}$ is an $\alpha$-alphabet and the word $w$ is a Gauss word on $\mathcal{A}$.  
An $n$-component {\it nanophrase} over $\alpha$ is a pair $(\mathcal{A}, p)$,  
where $\mathcal{A}$ is an $\alpha$-alphabet and $p$ is an $n$-component Gauss phrase on $\mathcal{A}$.  

Two nanophrases over $\alpha$, $(\mathcal{A}_1, p_1)$ and $(\mathcal{A}_2, p_2)$, are {\it isomorphic} 
if there exists a bijection map $f$ from $\mathcal{A}_1$ to $\mathcal{A}_2$ 
such that $f$ sends letterwise the $i$th component of $p_1$ to the $i$th component of $p_2$ for any $i$.

Rather than writing $(\mathcal{A}, p)$, we will simply use $p$ to indicate a nanophrase.
When we write a nanophrase in this way, we do not forget the set $\mathcal{A}$ of letters and the projection $\mathcal{A}\rightarrow \alpha$. 

\subsection{Equivalence relations of nanophrases}
Fix a finite set $\alpha $ and then let $\tau$ be an involution on $\alpha$ (that is, $\tau (\tau (a))$ is equal to $a$ for all $a \in \alpha$).  
Let $S$ be a subset of $\alpha \times \alpha \times \alpha$.  
We call the triple $(\alpha , \tau , S)$ a {\it homotopy} data.

Fixing a homotopy data $(\alpha , \tau , S)$, we define three {\it homotopy moves} on nanophrases over $\alpha$ as follows.
In the moves on nanophrases, the lower cases $x, y, z$ and $t$ represent any sequences of letters which is possible to include one or more `$|$', 
so that the phrase on each side of the move is a nanophrase.  
The moves are

\hspace{1.5em}move H1: for any $|A|$,

\hspace{6.5em}$(\mathcal{A}, xAAy) \longleftrightarrow (\mathcal{A}-\{ A\}, xy)$

\hspace{1.5em}move H2: if $\tau (|A|) = |B|$,

\hspace{6.5em}$(\mathcal{A}, xAByBAz) \longleftrightarrow (\mathcal{A}-\{ A, B\} , xyz)$

\hspace{1.5em}move H3: if $(|A|, |B|, |C|) \in S$,

\hspace{6.5em}$(\mathcal{A}, xAByACzBCt) \longleftrightarrow (\mathcal{A}, xBAyCAzCBt)$.  

\begin{figure}[H]
\includegraphics[scale=0.55]{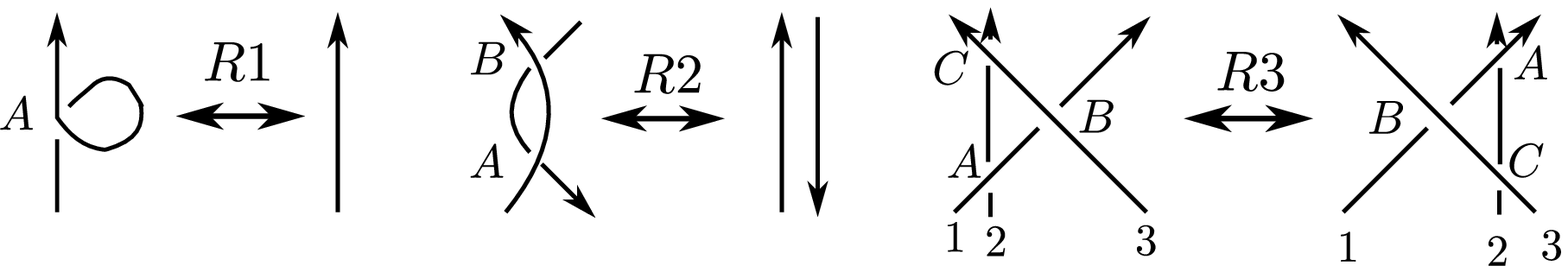}
\caption{}
\label{fig:1}
\end{figure}

The moves H1 - H3 on nanophrases correspond to Reidemeister moves R1 - R3 (in Fig.\ \ref{fig:1}) on link with base points, respectively.
Here we can only move the arcs in Fig.\ \ref{fig:1} if these arcs do not contain the base points.
In other words, no arc can cross the base points of the link.  
The {\it homotopy} is an equivalence relation of nanophrases over $\alpha $ generated by isomorphisms and the three homotopy moves H1 - H3.
The homotopy depends on the choice of the homotopy data  $(\alpha , \tau , S)$, so different choices of homotopy data provide different equivalence relations.
For the moves H1 - H3, components of any nanophrases were not added and removed, 
and so the number of the components is an invariant under any kind of homotopy.  

In \cite{MR2276346}, Turaev defined a {\it shift move} on nanophrases.
Let $\nu $ be an involution on $\alpha $ which is independent of $\tau $.
Let $p$ be an $n$-component nanophrase over $\alpha $.
A  {\it shift} move on the $i$th component of $p$ is a move which gives a new nanophrase $p' $ as follows.
If the $i$th component of $p$ is empty or contains a single letter, then $p'$ is $p$.
Otherwise, the $i$th component of $p$ has the form $Ax$.
Then the $i$th component of $p'$ is $xA$, and for all $j$ not equal to $i$ the $j$th component of $p'$ is the same as that of $p$.
Furthermore, if we write $|A|_p$ for $|A|$ in $p$ and $|A|_{p'}$ for $|A|$ in $p'$, then $|A|_{p'}$ equals $\nu(|A|_{p})$ 
when $x$ contains the letter $A$ and otherwise, $|A|_{p'}$ equals $|A|_p$.

We also call the equivalence relation generated by isomorphisms, homotopy moves and shift moves a {\it homotopy} of nanophrases over $\alpha $.
We call the first homotopy without shift moves an {\it open homotopy} and the second homotopy simply a {\it homotopy}.   
The homotopy depends on the triple $(\alpha ,\tau ,S)$ and $\nu $.   

Let $\alpha_{v} $ be the set $\{a_+,a_-,b_+,b_-\}$ and $\tau _{v} $ the involution on $\alpha_{v} $ which sends  $a_+$ to $b_-$ and $a_-$ to $b_+$.
Let $S_{v} $ be the set \\

\hspace{5.5em}$S_v  = \left\{
\begin{array}{l}
{(a_+,a_+,a_+),(a_+,a_+,a_-),(a_+,a_-,a_-), }\\  
   {(a_-,a_-,a_-),(a_-,a_-,a_+),(a_-,a_+,a_+),}  \\ 
   {(b_+,b_+,b_+),(b_+,b_+,b_-),(b_+,b_-,b_-), } \\  
  {(b_-,b_-,b_-),(b_-,b_-,b_+),(b_-,b_+,b_+) }
\end{array}
\right\}$ .\\

In \cite{MR2276346}, Turaev proved 
\begin{theorem}[Turaev \cite{MR2276346}] \label{e.thm}
(i)The set of homotopy classes of nanowords over $\alpha_{v} $ under the open homotopy with respect to $(\alpha_{v}  ,\tau_{v}  ,S_{v} )$ is in a bijective correspondence to open virtual knots. 
Moreover under the same homotopy data, the set of homotopy classes of nanophrases over $\alpha_{v}  $ is also in a bijective correspondence to the set of stable equivalence classes of pointed ordered link diagrams on oriented surfaces. 

(ii)Let $\nu _{v} $ be the involution on $\alpha _{v} $, where $a_+$ is mapped to $b_+$ and $a_-$ to $b_-$.
Under the homotopy defined by $(\alpha_{v}  ,\tau_{v}  ,S_{v} )$ and $\nu_v$, the set of homotopy classes of nanophrases over $\alpha_{v}$ is in a bijective correspondence to ordered virtual links. 
\end{theorem}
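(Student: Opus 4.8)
The plan is to construct mutually inverse correspondences between the relevant classes of diagrams and nanophrases over $\alpha_v$, and then to match the two equivalence relations move for move. The forward map generalizes the Gauss-word construction recalled in the Introduction: given a pointed, ordered link diagram (on an oriented surface, in the general case), I would label the classical crossings and read off, for each component in turn starting from its base point, the sequence of labels encountered while traversing it. Since each classical crossing is met exactly twice, this produces an $n$-component Gauss phrase. I then decorate each label $A$ by the element of $\alpha_v$ that records the local data of the corresponding crossing, namely its sign together with the over/under information distinguishing its two passages, so that the four elements of $\alpha_v$ exhaust the possible local crossing types. I would check that isomorphic decorated phrases correspond exactly to diagrams agreeing up to relabeling and orientation-preserving homeomorphism, giving a well-defined map on isomorphism classes, and that every nanophrase over $\alpha_v$ arises in this way (the realizability point, addressed below).

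The second step is to show that the generating equivalences correspond. I would verify directly that R1, R2 and R3, performed in a region avoiding the base points (Fig.~\ref{fig:1}), translate into H1, H2 and H3, and conversely. Move R1 adds or deletes a single crossing whose two passages are adjacent, which is the pattern $xAAy \longleftrightarrow xy$ of H1 for any $|A|$. Move R2 adds or deletes two crossings whose labels interleave as $xAByBAz$, and the content of the move is precisely that these two crossings carry the complementary local types recorded by the constraint $\tau_v(|A|)=|B|$. For R3 I would enumerate, according to the over/under and sign configurations of the three strands bounding the triangle, the ways the three crossings occur along the components; the resulting letter pattern is that of H3, and the admissible decorations are exactly the twelve triples collected in $S_v$, namely the six triples all of whose entries are of type $a$ together with the six all of type $b$, the remaining sign patterns being excluded as describing configurations that cannot occur. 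This dictionary shows that homotopy with respect to $(\alpha_v,\tau_v,S_v)$ coincides with Reidemeister equivalence of the underlying pointed diagrams.

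For part (i), what remains is to relate the combinatorial phrases to diagrams on oriented surfaces. A Gauss phrase need not be planar, but every one is realizable by a diagram on some closed oriented surface, and any two realizations of isomorphic phrases are related by stabilization, that is by adding and deleting handles; hence isomorphism classes of nanophrases over $\alpha_v$ together with H1--H3 correspond precisely to stable equivalence classes of pointed ordered link diagrams on oriented surfaces. Restricting to one component and noting that \emph{open} refers to keeping the single base point fixed yields the open virtual knot statement. For part (ii), I would read the shift move as sliding the base point of a component past its first crossing; this interchanges the two passages at that crossing, the combinatorial effect of which is exactly the involution $\nu_v$, and the proviso that $x$ contains $A$ records that the adjustment is applied only when the crossing's other passage lies on the same component. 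Adjoining shift moves therefore forgets the location of the base points, and the resulting homotopy classes are in bijection with ordered virtual links.

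The main obstacle I expect is the realizability and stable-equivalence step of part (i), rather than the local move-matching: one must show both that an arbitrary, possibly non-planar, Gauss phrase is realized by a diagram on some oriented surface and that distinct realizations of isomorphic phrases are related exactly by stabilization, with no further relations slipping in. The second delicate point is the consistency of the bookkeeping in $S_v$ under R3, where verifying that the list of twelve admissible triples is neither too large nor too small forces the sign and over/under conventions to be pinned down with care.
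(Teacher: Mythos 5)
This statement is quoted by the paper as Turaev's theorem and cited to \cite{MR2276346}; the paper itself contains no proof of it, so there is no in-paper argument to compare yours against move for move. That said, your outline follows the route one would expect (and essentially the route Turaev takes): read a pointed ordered diagram into a Gauss phrase, decorate each letter by the sign and over/under data of its crossing so that $\alpha_v$ enumerates the local types, check that R1, R2, R3 away from base points translate into H1, H2, H3 with the constraints $\tau_v$ and $S_v$, and identify the shift move with moving a base point past the first crossing, which is where $\nu_v$ enters. Your reading of the proviso ``$x$ contains $A$'' as the case where both passages of the shifted crossing lie on the same component is the right one.

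Two places where the sketch is thinner than the theorem requires. First, the realizability and stable-equivalence step you flag as the main obstacle is genuinely the content of part (i): one must know that every Gauss phrase over $\alpha_v$ is realized by a diagram on some closed oriented surface and that two realizations of the same phrase differ exactly by stabilization. This is not something your move-dictionary produces; it rests on the Gauss-diagram formulation of virtual links and on prior stable-equivalence results, and any proof must either import them or reprove them. Second, your justification for $S_v$ is slightly miscast: the triples absent from $S_v$ are not all ``configurations that cannot occur.'' The H3 move fixes one interleaving pattern $xAByACzBCt \leftrightarrow xBAyCAzCBt$, and an oriented R3 move can produce other interleavings; the correct statement is that the twelve listed triples are exactly those realized with the fixed pattern, \emph{and} that the remaining oriented R3 variants are consequences of H1--H3. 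So alongside the enumeration you describe, a generation argument (that the listed moves imply the omitted R3 variants, typically via R2) is needed; without it the dictionary only shows that H1--H3 equivalence is implied by Reidemeister equivalence, not that the two coincide.
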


\section{The $\bar{\mu} $ invariants and nanophrases}
In this section, we introduce a self crossing move on nanophrases and a new weak homotopy, by allowing self crossing moves. 
We call this an $M$-homotopy. 
Consider the set of equivalence classes of nanophrases corresponding to ordered virtual links,
that is equivalence classes of nanophrases defined by $(\alpha_{v},\tau_{v},S_{v})$ and $\nu_{v} $. 
We define an $M$-homotopy invariant of nanophrases corresponding to ordered virtual links.
This invariant is an extension of $\bar{\mu}$ invariants introduced by Milnor in \cite{MR0071020,MR0092150}. 

\subsection{Self crossing and $M$-homotopy}  
Let ($\alpha ,\tau ,S$) be any homotopy data and $\nu$ an involution on $\alpha$ independent of $\tau$.
We introduce a {\it self crossing move} on nanophrases over $\alpha $. 
Let $\sigma $ be an involution on $\alpha $ independent of $\tau $ and $\nu $.
Let $p$ be an $n$-component nanophrase over $\alpha $.
A {\it self crossing move} on the $k$th component of $p$ is a move  which gives a new nanophrase $p'$ as follows.
If there is a letter $A$ in $\mathcal{A}$ which appears exactly twice in the $k$th component of $p$, 
then the $k$th component of $p$ has the form $xAyAz$. 
Then the $k$th component of $p'$ also has the form $xAyAz$.
Furthermore, writing $|A|_p$ for $|A|$ in $p$ and $|A|_{p'}$ for $|A|$ in $p'$, 
we have the identity $|A|_{p'} = \sigma (|A|_p)$. 

We then define {\it $M$-homotopy} and consider it in this paper.
\begin{definition}
{\it Open $M$-homotopy} is the equivalence relation of nanophrases over $\alpha $ generated by isomorphisms, the three homotopy moves H1 - H3 with respect to $(\alpha, \tau ,S)$ and self crossing moves with respect to $\sigma$. 
Similarly, {\it $M$-homotopy} is the equivalence relation of nanophrases over $\alpha $ generated by isomorphisms, 
three homotopy moves with respect to $(\alpha ,\tau ,S)$, 
self crossing moves with respect to $\sigma$ and shift moves with respect to $\nu$.   
\end{definition}

We give an example.
The linking matrix of a nanophrase was first defined by Fukunaga \cite{label7389}
for some special $S$ and by Gibson \cite{label8454} in general.  
It is an invariant under any kind of homotopy of nanophrases. 
We recall the definition here.
First of all, let $\pi $ be the abelian group generated by elements in $\alpha $ with the relations $a + \tau (a) = 0$ for all $a$ in $\alpha $.
In \cite{label7389}, $\pi $ is written multiplicatively, but here we will write it additively. 
For an $n$-component nanophrase $p$, the {\it linking matrix} of $p$ is defined as follows. 
Let $\mathcal{A}_{ij} (p)$ be the set of letters which have one occurrence in the $i$th component of $p$ and the other occurrence in the $j$th component of $p$. 
Let $l_{ii}(p)$ be $0$, and when $j$ is not equal to $i$, let $l_{ij}(p)$ be
\[l_{ij}(p) =\sum _{A\in \mathcal{A}_{ij} (p)} |A| . \]
The linking matrix $L(p)$ is a symmetric $n\times n$ matrix with entries $l_{ij}(p)$'s in $\pi $.
It is easy to see that the linking matrix of nanophrases is a homotopy and $M$-homotopy invariant of nanophrases.

\subsection{Definition of $\bar{\mu} $} 
From now on, we work only with the homotopy defined by $(\alpha_{v}  ,\tau_{v}  ,S_{v} )$, 
$\nu_{v} $ and the involution $\sigma_v$ which sends $a_+$ to $a_-$ and $b_+$ to $b_-$.

Let $(\mathcal{A},p)$ be an $n$-component nanophrase whose Gauss phrase $p$ is represented by $w_1|w_2| \cdots |w_n$. 
We will write $p$ for $(\mathcal{A},p)$ for simplicity.
Let $w_i$ be $A_{i1}A_{i2} \cdots A_{im_i}$, where $A_{ij}$'s are letters in $\mathcal{A}$.
For each $w_i$, we define a word on $\mathcal{A} \cup \mathcal{A}^{-1}$ by
\[ w_i^{\varepsilon} = A_{i1}^{\varepsilon _{i1}}A_{i2}^{\varepsilon _{i2}} \cdots A_{im_i}^{\varepsilon _{im_i}}, \] 
where $\varepsilon _{ij}$ is determined as follows.
Since $p$ is a nanophrase, any letter appears exactly twice in the Gauss phrase $w_1|w_2| \cdots |w_n$.
Let $A$ denote the letter represented by $A_{ij}$.
Then there exists a unique pair of integers $(k,l)$ $((k,l) \neq (i,j))$ so that $A_{kl}$ represents $A$.
In other words, the other $A$ appears as the $l$th letter on the $k$th component.    
If $i<k$ and $|A| = b_+$ or $i>k$ and $|A| =a_+$, then $\varepsilon _{ij}=1$.
If $i<k$ and $|A| = a_-$ or $i>k$ and $|A| =b_-$, then $\varepsilon _{ij}=-1$.
Otherwise, $\varepsilon _{ij}=0$. 
Namely, if the letter $A$ appears exactly once in the $i$th component,
and if $A_{ij}$ appears earlier (resp. later) than the other $A$ and $|A| = b_+$ (resp. $a_+$), then $\varepsilon _{ij}$ is $1$. 
If $A$ appears exactly once in the $i$th component, and if $A_{ij}$ appears earlier (resp. later) than the other $A$ and $|A| = a_- $ (resp. $b_-$), 
then $\varepsilon _{ij}$ is $-1$. 
For other cases, let $\varepsilon _{ij}$ be zero. 
Here we will call a letter $A_{ij}$ with sign $\varepsilon _{ij}$ the {\it signed letter} 
and a word $w_i$ with sign the {\it signed $i$th component} of the nanophrase $p$.
We note that given a nanophrase $w=w_1|w_2| \cdots |w_n$, 
each letter used (twice) in $w$ appears at most once in the resulting phrase $w_1^\varepsilon|w_2^\varepsilon| \cdots |w_n^\varepsilon$. \\
In the following, we use the convention that $A^0=\emptyset$ and ${(A_1 A_2 \cdots A_n)}^{-1}=A_n^{-1}A_{n-1}^{-1} \cdots A_1^{-1}$.
We note that $AA^{-1} \neq \emptyset$.

\begin{example} \label{ex2}
Consider a nanophrase $p=AB|CDB|DEA|FFCE $, where $|A|=|B|=|C|=b_+,|D|=b_-,|E|=|F|=a_-$.
Then 
\begin{align*}
w_1^\varepsilon  = AB, \hspace{.3cm}
w_2^\varepsilon  =C,  \hspace{.3cm}
w_3^\varepsilon  =D^{-1}E^{-1}, \hspace{.3cm}
w_4^\varepsilon  =\emptyset. 
\end{align*}
\end{example}

Let $\mathcal{L}$ denote the set of words on $\mathcal{A} \cup \mathcal{A}^{-1}$.
Then we define a sequence of  maps $\rho^q$ $(q=2,3,\cdots)$ from $\mathcal{L}$ to itself by induction on $q$.
\begin{align*}
\rho^2(A_{ij}^\pm ) &= A_{ij}^\pm  \\
\rho^q(A_{ij}^\pm ) &= \rho^{q-1}(x_{ij}^{-1})A_{ij}^\pm \rho^{q-1}(x_{ij}), ~~q\geq 3 \\
\rho^q(\emptyset ) &= \emptyset \text{ for all } q\geq 2, 
\end{align*}
where $x_{ij} = A_{k1}^{\varepsilon _{k1}}A_{k2}^{\varepsilon _{k2}} \cdots A_{kl-1}^{\varepsilon _{kl-1}}$,
that is $x_{ij}$ is the signed word obtained by truncating $w_k^\varepsilon$ at $A_{kl}^{\varepsilon_{kl}}$ 
(where the pair $(k,l)$ is derived from the pair $(i,j)$ as explained above).  
We call the word $\rho ^q(A_{ij}^{\varepsilon_{ij}})$ the {\it expanding word} of $A_{ij}$, and $\rho ^q(w_i^{\varepsilon})$ that of $w_i$.
 
\begin{example}\label{ex2.5}
Consider a nanophrase $p=AB|CDB|DEA|FFCE $, where $|A|=|B|=|C|=b_+,|D|=b_-,|E|=|F|=a_-$ (Example \ref{ex2}).
Then 
\begin{align*}
& \rho^2 (w_1^\varepsilon ) = AB,\hspace{.3cm} \rho^2 (w_2^\varepsilon ) =C,\hspace{.3cm} \rho^2 (w_3^\varepsilon ) =D^{-1}E^{-1}, \hspace{.3cm} \rho^2 (w_4^\varepsilon ) =\emptyset. \\
& \rho^3 (w_1^\varepsilon ) = EDAD^{-1}E^{-1}C^{-1}BC ,\hspace{.3cm} \rho^3 (w_2^\varepsilon ) = C, \\
&\rho^3 (w_3^\varepsilon ) =C^{-1}D^{-1}CE^{-1}, \hspace{.3cm} \rho^3 (w_4^\varepsilon ) = \emptyset . \\
& \rho^4 (w_1^\varepsilon ) = \rho^3 (ED)A\rho^3(D^{-1}E^{-1})\rho^3(C^{-1})B\rho^3(C) \\
& \hspace{1cm} = E\rho^2 (C^{-1})D\rho^2 (C)A\rho^2 (C^{-1})D^{-1}\rho^2 (C)E^{-1}C^{-1}BC, \\
& \hspace{1cm} = EC^{-1}DCAC^{-1}D^{-1}CE^{-1}C^{-1}BC, \\
&\rho^4 (w_2^\varepsilon ) = C, \hspace{.3cm} \rho^4 (w_3^\varepsilon ) =C^{-1}D^{-1}CE^{-1}, \hspace{.3cm} \rho^4 (w_4^\varepsilon ) = \emptyset. \\
\end{align*}
\end{example}

Recall that $p=w_1|w_2| \cdots |w_n$ is an $n$-component nanophrase.
We call the index $i$ of a component $w_i$ the order of $w_i$.
Let $M$ denote a finite set $\{a_1, \ldots , a_n\}$. 
Let $\mathcal{M}$ denote the set of words on $M \cup M^{-1}$.
We define a map $\eta$ from $\mathcal{L}$ to $\mathcal{M}$ as follows.
For any letter $A$ in $\mathcal{A}$, let $\eta(A)$ be $a_k$ and $\eta(A^{-1})$ be $a_k^{-1}$, where $k$ is determined by the following rule.
The map $\eta$ assigns to $A$ the component that $A$ belongs to.
If $|A|=b+$ or $a-$, then $k$ is the order of the component of $p$ in which the second $A$ occurs,
and if $|A|=a+$ or $b-$, then $k$ is the order of the component of $p$ in which the first $A$ occurs.
To see what $k$ is for a letter in $w_i$, let us recall the definition of $w_i^\varepsilon$.
If $|A|=b+$ or $a-$, then the sign of the second $A$ is 0, and if $|A|=a+$ or $b-$, then that of the first $A$ is 0.
Thus, if $A_{ij}$ represents $A$ and $\varepsilon_{ij} = 0$, then $\eta(A)$ is $a_i$.    

\begin{example}\label{ex2.55}
Consider a nanophrase $p=AB|CDB|DEA|FFCE$, where $|A|=|B|=|C|=b_+$, $|D|=b_-$, $|E|=|F|=a_-$ (Example $\ref{ex2.5}$).
Then we have
\begin{align*}
& \eta ( \rho^2 (w_1^\varepsilon )) =a_3 a_2,~~ \eta ( \rho^2 (w_2^\varepsilon ) )={a_4}, ~~\eta ( \rho^2 (w_3^\varepsilon ) ) =a_2^{-1}a_4^{-1},~~ \eta ( \rho^2(w_4^\varepsilon ) ) = \emptyset. \\
& \eta ( \rho^3(w_1^\varepsilon) )= a_4 a_2 a_3a_2^{-1}a_4^{-1}a_4^{-1}a_2 a_4,~~ \eta ( \rho^3 (w_2^\varepsilon) )  = a_4,~~ \\
&\eta ( \rho^3 (w_3^\varepsilon) ) = a_4^{-1}a_2^{-1}a_4a_4^{-1},~~\eta ( \rho^3(w_4^\varepsilon) ) = \emptyset. \\
& \eta ( \rho^4(w_1^\varepsilon) ) = a_4a_4^{-1}a_2 a_4 a_3a_4^{-1}a_2^{-1}a_4a_4^{-1}a_4^{-1}a_2 a_4, ~~\eta ( \rho^4 (w_2^\varepsilon) ) = a_4, \\
& \eta ( \rho^4 (w_3^\varepsilon) ) =a_4^{-1}a_2^{-1}a_4a_4^{-1},~~ \eta ( \rho^4(w_4^\varepsilon) ) = \emptyset. \\
\end{align*}
\end{example}

We define a map $\varphi$ from $\mathcal{M}$ to $\mathbb{Z} [[\kappa_1,\kappa_2, \ldots, \kappa_n]]$ by
\begin{eqnarray*}
\varphi (a_h) &=& 1 + \kappa _h \\
\varphi (a_h^{-1}) &=& 1 - {\kappa _h} + \kappa _h^2 - \kappa _h^3 + \cdots, 
\end{eqnarray*}
where $\mathbb{Z} [[\kappa_1,\kappa_2, \ldots, \kappa_n]]$ is the ring of formal power series on non-commuting variables $\kappa_1, \kappa_2, \ldots, \kappa_n$. 

We consider $\varphi \circ \eta ( \rho ^q(w_i^\varepsilon))$ in $\mathbb{Z} [[\kappa_1,\kappa_2, \ldots, \kappa_n]]$.
Since $\varphi \circ \eta ( \rho ^q(w_i^\varepsilon))$ agrees with $\varphi \circ \eta ( \rho ^r(w_i^\varepsilon))$ for any $r\geq q$ up to degree $q$,
the coefficient of a term $\kappa _{c_1} \kappa _{c_2}  \ldots \kappa _{c_u}$ in $\varphi \circ \eta ( \rho ^q(w_i^\varepsilon))$ converges as $q\rightarrow \infty $.
Thus we have a well-defined expansion,
\[ \lim_{q\rightarrow \infty} \varphi \circ \eta ( \rho ^q(w_i^\varepsilon) )= 1+ \sum \mu (p; c_1, c_2, \ldots ,c_u,i) \kappa _{c_1} \kappa _{c_2}  \cdots \kappa _{c_u}. \]
where $c_1, c_2, \ldots ,c_u,i$ is a sequence of integers  between $1$ and $n$. 
Here we note that the integers in the sequence are not necessarily mutually different. 

Let $\Delta (p; c_1, c_2, \ldots ,c_u,i)$ denote the greatest common divisor of $\mu (p;d_1, d_2, \ldots ,d_t)$, 
where the sequence $d_1, d_2, \ldots ,d_t$ $(2\leq t \leq u)$ ranges over all sequences obtained by eliminating at least one of $c_1, c_2, \ldots ,c_u,i$ and permuting the remaining indices cyclically.
We also define $\Delta (p; c_1,i) = 0 $.
Let $\bar{\mu} (p; c_1, c_2, \ldots ,c_u,i)$ denote the residue class of $\mu (p; c_1, c_2, \ldots ,c_u,i)$ modulo $\Delta (p; c_1, c_2, \ldots ,c_u,i)$. 

The main theorem of this paper is as follows.
\begin{theorem} \label{m.them}

Let $p$ be an $n$-component nanophrase.
Let $c_1, c_2, \ldots ,c_u,i$ be a sequence of integers between $1$ and $n$ such that $c_1, c_2, \ldots ,c_u,i$ are pairwise distinct.
Then $\bar{\mu} (p; c_1, c_2, \ldots ,c_u,i)$ 
is an invariant under $M$-homotopy of nanophrases with respect to $(\alpha_v, \tau_v,S_v)$, $\nu_v$ and $\sigma_v$.
\end{theorem}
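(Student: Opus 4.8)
The plan is to verify that $\bar{\mu}(p;c_1,c_2,\ldots,c_u,i)$ is preserved by each generator of $M$-homotopy separately: isomorphisms, the moves H1, H2, H3 with respect to $(\alpha_v,\tau_v,S_v)$, the self crossing move with respect to $\sigma_v$, and the shift move with respect to $\nu_v$. Invariance under isomorphisms is immediate, since the entire passage from $w_i^\varepsilon$ through $\rho^q$, $\eta$ and $\varphi$ is defined intrinsically from the $\alpha_v$-structure of $p$. The guiding principle throughout is that $\varphi\circ\eta(\rho^q(w_i^\varepsilon))$ is the nanophrase incarnation of the Magnus expansion of Milnor's recursively-defined $i$-th longitude, written in the meridian variables $\kappa_1,\ldots,\kappa_n$; thus $\mu(p;\cdots)$ plays the role of Milnor's $\mu$ and $\bar\mu$ that of Milnor's $\bar\mu$. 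The moves H1 and the self crossing move can be treated together and are the easiest: in each case the active letter $A$ has both of its occurrences in a single component, so the sign rule forces $\varepsilon=0$ on both occurrences irrespective of $|A|$. Hence $A$ contributes $A^0=\emptyset$ to every signed word $w_j^\varepsilon$ and to every truncation $x_{i'j'}$ feeding the recursion, so neither relabelling $A$ by $\sigma_v$ nor deleting the block $AA$ changes any expanding word, its image under $\eta$, or its image under $\varphi$. Consequently these two moves fix the raw coefficients $\mu(p;\cdots)$, and a fortiori $\bar\mu$.

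For the move H2, which removes a pattern $xAByBAz$ with $\tau_v(|A|)=|B|$, I would show that $\mu$ is again preserved exactly. The relation $\tau_v(|A|)=|B|$ is precisely what makes the contributions of $A$ and $B$ to $\eta\circ\rho^q$ mutually inverse meridians, so that the two adjacent occurrences cancel by way of $\varphi(a_h)\varphi(a_h^{-1})=1$. The work here is the casework recording how the four occurrences of $A$ and $B$ distribute among the components, together with the effect of deleting $A$ and $B$ on the truncations $x_{i'j'}$ of the remaining letters; in every case the expansions before and after agree.

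The move H3, the triangle move $xAByACzBCt \leftrightarrow xBAyCAzCBt$ subject to $(|A|,|B|,|C|)\in S_v$, is where I expect the main difficulty to lie. Each of the three blocks transposes two adjacent signed letters, so every longitude $w_j^\varepsilon$ that runs through one or more of the blocks is modified by adjacent transpositions, and these modifications propagate through all levels of the nested recursion $\rho^q(A_{i'j'}^{\pm})=\rho^{q-1}(x^{-1})A_{i'j'}^{\pm}\rho^{q-1}(x)$. The plan is to prove, by induction on $q$, that the membership $(|A|,|B|,|C|)\in S_v$ is exactly the condition under which the commutator corrections created in the three blocks cancel order by order in $\varphi\circ\eta$, so that each longitude has the same expansion before and after. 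Carrying out this cancellation, while keeping track of which of $A$, $B$, $C$ lies in which component and of how each enters the truncations of the others, is the most delicate and computation-intensive part of the argument.

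Lastly, a shift move cyclically rotates the chosen component and relabels the wrapped letter by $\nu_v$; on the level of the associated expansion this re-bases the longitude, conjugating it by a single meridian and correspondingly adjusting the truncations that feed the recursion. By the behaviour of the Magnus expansion $E$ under conjugation, $E(g^{-1}lg)=E(g)^{-1}E(l)E(g)$, the coefficient of $\kappa_{c_1}\cdots\kappa_{c_u}$ changes only by sums of products of lower-degree coefficients, and each such term is a value $\mu(p;\cdots)$ attached to a proper cyclic subsequence of $(c_1,\ldots,c_u,i)$. It is exactly here that the hypothesis that $c_1,c_2,\ldots,c_u,i$ be pairwise distinct enters: distinctness guarantees that these correction terms lie in the ideal measured by $\Delta(p;c_1,\ldots,c_u,i)$ and therefore vanish after reduction. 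Hence $\bar\mu$ is unchanged by shift moves, and combining the cases establishes the $M$-homotopy invariance of $\bar{\mu}(p;c_1,c_2,\ldots,c_u,i)$.
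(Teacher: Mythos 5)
Your overall strategy coincides with the paper's: invariance is checked move by move, with $\mu$ itself preserved exactly under isomorphisms, H1, self crossing, and H2 (for the last, by the same cancellation $\varphi\circ\eta(\rho^r(AB^{-1}))=1$ coming from $\tau_v(|A|)=|B|$ and the shared truncation word), and with only $\bar\mu$ preserved under the shift move. However, the two steps you flag as hard are exactly where your plan has genuine gaps. For H3, your stated goal --- that ``each longitude has the same expansion before and after'' --- is too strong and would fail: the full power series $\varphi\circ\eta(\rho^q(w_i^\varepsilon))$ do \emph{not} agree in general, and coefficients of monomials with repeated indices need not be invariant under H3. Pairwise distinctness of $c_1,\ldots,c_u,i$ is already essential at this step (not only at the shift move, as you assert): when the blocks $AB$ and $AC$, say, lie in different components, one must split the contributing terms according to whether they pass through $A$, through $B$, or through neither, and this splitting is a disjoint decomposition only because a square-free monomial can record the component index $\eta(A)=\eta(B)=a_k$ at most once. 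The paper organizes this via a counting lemma (Lemma \ref{lem0}) identifying the square-free coefficients with signed counts of rooted subforests of the trees built from the recursion $\rho^q$; some such bookkeeping device, or an equivalent, is needed to make your ``commutator corrections cancel order by order'' precise, and your induction on $q$ as described does not supply it.

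For the shift move, the conjugation identity $E(g^{-1}lg)=E(g)^{-1}E(l)E(g)$ handles only the component being shifted (the paper's Case 3-2, via Lemma \ref{lem1}). The harder situation is the one you mention only in passing as ``adjusting the truncations'': when the wrapped letter $A$ has its second occurrence in a different component, moving the base point of component $k$ past $A$ replaces the expanding word $\rho^{q}(A^{\pm})=\rho^{q-1}(x^{-1})A^{\pm}\rho^{q-1}(x)$ by one in which $x$ is conjugated by essentially the whole signed longitude $w_k^\varepsilon$, and this perturbation propagates into $\rho^q(w_i^\varepsilon)$ for every other component $i$ whose longitude passes through $A$. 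Showing that the resulting corrections vanish modulo $\Delta(p;c_1,\ldots,c_u,i)$ is not a consequence of the conjugation identity for the longitude; it requires Milnor's identity that the square-free coefficients of $\varphi\bigl(a_k\,\eta(\rho^s(w_k^\varepsilon))\,a_k^{-1}\,\eta(\rho^s(w_k^\varepsilon))^{-1}\bigr)$ vanish modulo $\Delta$ (the paper's Lemma \ref{lem2}), combined with an induction on the length $u$ so that the lower-degree $\bar\mu$, and hence $\Delta$, are already known to be invariant. Without this ingredient your shift-move case is incomplete.
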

Propositions \ref{prop1}, \ref{prop2}, \ref{prop3}, \ref{prop5} and \ref{prop4} will imply the proof.

\section{Examples}

\begin{example}
Let $p=ABCD|ECFA|DFBE$, where $|A|=|E|=b+$, $|B|=b-$, $|C|=|F|=a-$ and $|D|=a+$. 
This corresponds to the Borromean rings illustrated in Fig.\ \ref{fig.e1}.
Then since $w_1^\varepsilon=AC^{-1}$, 
\begin{align*}
\rho^2(w_1^\varepsilon) &=AC^{-1} \\
\rho^3(w_1^\varepsilon) &=FE^{-1}AEF^{-1}E^{-1}C^{-1}E. 
\end{align*}
Thus we have
\begin{align*}
\eta (\rho^2(w_1^\varepsilon)) &=a_2a_2^{-1} \\
\eta(\rho^3(w_1^\varepsilon)) &=a_3a_3^{-1}a_2a_3a_3^{-1}a_3^{-1}a_2^{-1}a_3. 
\end{align*}
Therefore $\mu(2,1)=\mu(3,1)=0$ and $\mu(2,3,1)=-1$.  
Similarly $\mu(1,2)=\mu(1,3)=\mu(2,3)=\mu(3,2)=0$ and so 
$\Delta(2,3,1)=0$.
Hence $\bar{\mu}(2,3,1) \equiv -1 \pmod0$.

\begin{figure}[H]
\begin{center}
\includegraphics[scale=0.2]{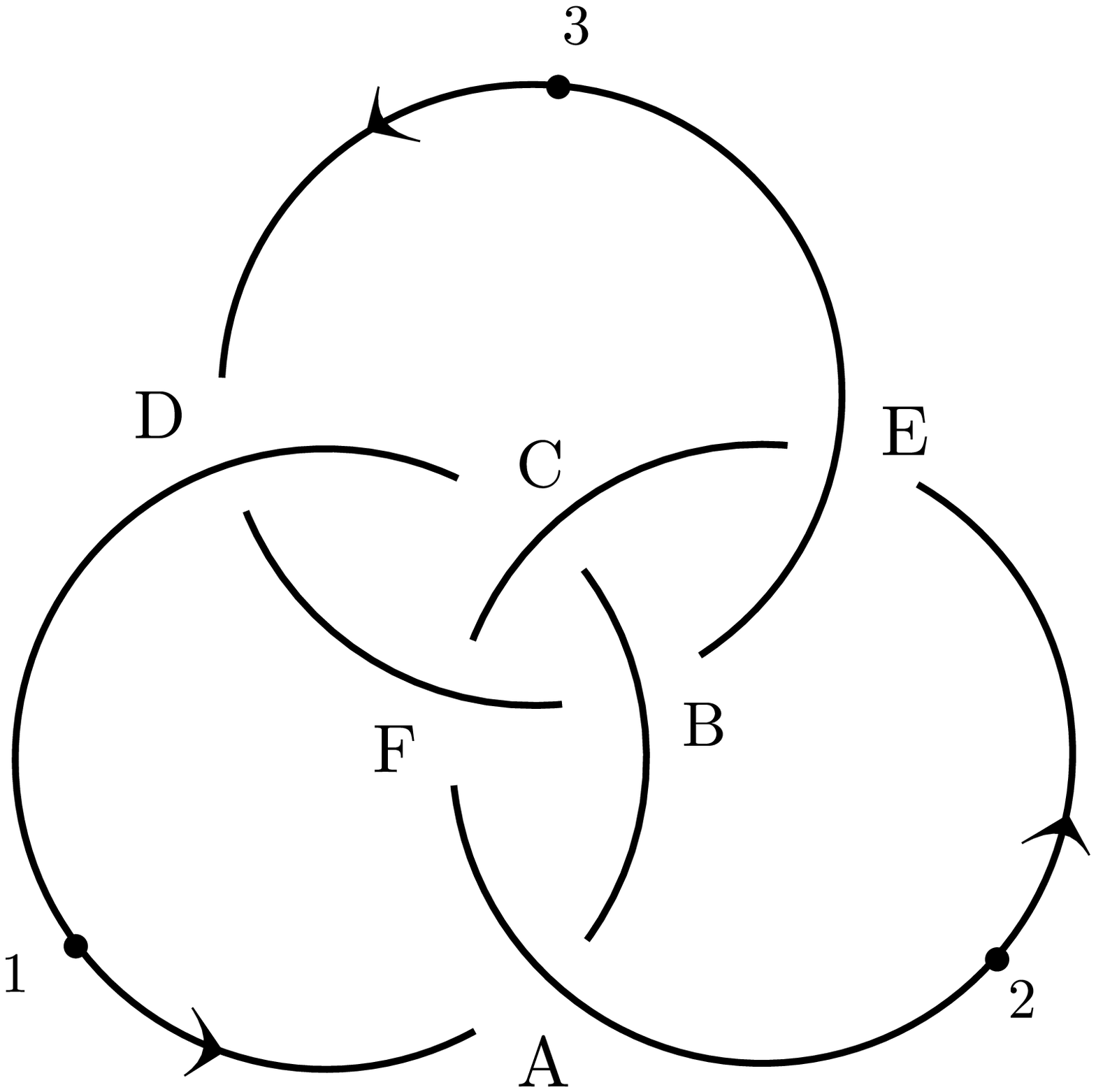}
\caption{}
\label{fig.e1}
\end{center}
\end{figure}
\end{example}

\begin{example}
Let $p=A_1B_1A_2B_2 \ldots A_nB_n|A_1B_1A_2B_2 \ldots A_nB_n$, where $|A_j|=b+$ and $|B_j|=a+$.
This corresponds to the $(2n, 2)$ torus link illustrated in Fig.\ \ref{fig.2}.
Then since $w_1^\varepsilon=A_1 \ldots A_n$ and $w_2^\varepsilon=B_1 \ldots B_n$,
\begin{align*}
\eta (\rho^2(w_1^\varepsilon)) &=a_2 \ldots a_2 \\
\eta(\rho^2(w_2^\varepsilon)) &=a_1 \ldots a_1. 
\end{align*}
Therefore $\mu(2,1)=n$ and $\mu(1,2)=n$, and so $\bar{\mu}(2,1)=n$ and $\bar{\mu}(1,2)=n$.

\begin{figure}[H]
\begin{center}
\includegraphics[scale=0.4, angle=0]{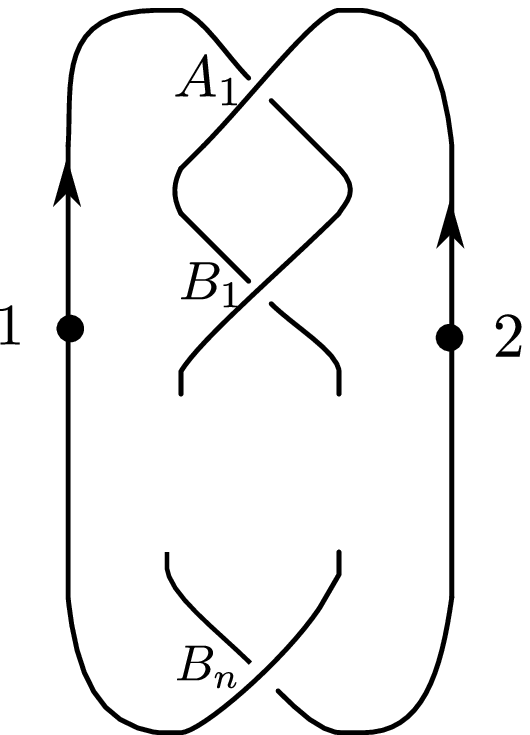}
\caption{}
\label{fig.2}
\end{center}
\end{figure}
\end{example}

\begin{example}
Let $p=A_1A_2 \ldots A_n|A_1A_2 \ldots A_n$, where $|A_j|=b+$.
This corresponds to the virtual link illustrated in Fig.\ \ref{fig.e2}.
Then since $w_1^\varepsilon=A_1 \ldots A_n$ and $w_2^\varepsilon=\emptyset$,
\begin{align*}
\eta (\rho^2(w_1^\varepsilon)) &=a_2 \ldots a_2 \\
\eta(\rho^2(w_2^\varepsilon)) &=\emptyset. 
\end{align*}
Therefore $\mu(2,1)=n$ and $\mu(1,2)=0$, and so $\bar{\mu}(2,1)=n$ and $\bar{\mu}(1,2)=0$.

\begin{figure}[H]
\begin{center}
\includegraphics[scale=0.5, angle=0]{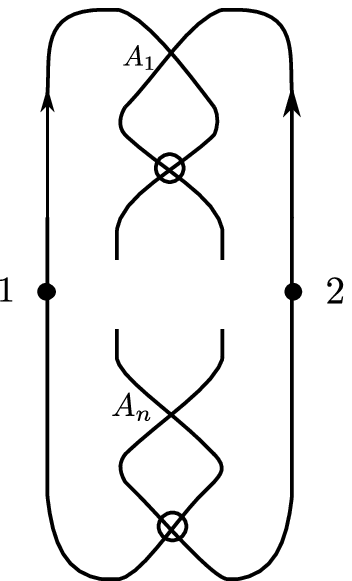}
\caption{}
\label{fig.e2}
\end{center}
\end{figure}
\end{example}

\begin{example}\label{ce}
Let $p=AB|FBCDAE|DC|EF$, where $|A|=|B|=|E|=b+$, $|C|=b-$, $|D|=a-$ and $|F|=a+$. 
This corresponds to the link illustrated in Fig.\ \ref{fig.7}.
Then since $w_1^\varepsilon=AB$, 
\begin{align*}
\rho^2(w_1^\varepsilon) &=AB \\
\rho^3(w_1^\varepsilon) &=DAD^{-1}B. 
\end{align*}
Thus we have
\begin{align*}
\eta (\rho^2(w_1^\varepsilon)) &=a_2a_2 \\
\eta(\rho^3(w_1^\varepsilon)) &=a_3a_2a_3^{-1}a_2. 
\end{align*}
Therefore $\mu(2,1)=2$, $\mu(3,1)=0$, $\mu(2,3,1)=-1$ and $\mu(3,2,1)=1$.  
Similarly $w_2^\varepsilon=D^{-1}E$ and $w_3^\varepsilon=C^{-1}$.
Thus we have 
\begin{align*}
\eta (\rho^2(w_2^\varepsilon)) &=a_3^{-1}a_4 \\
\eta(\rho^2(w_3^\varepsilon)) &=a_2^{-1}. 
\end{align*}
Therefore $\mu(1,2)=0$, $\mu(3,2)=-1$, $\mu(1,3)=0$ and $\mu(2,3)=-1$.
So we have $\Delta(2,3,1)=\Delta(3,2,1)=1$.
As a result, $\bar{\mu}(2,3,1) \equiv \bar{\mu}(3,2,1) \equiv 0 \pmod1$.

\begin{figure}[H]
\begin{center}
\includegraphics[scale=0.6, angle=0]{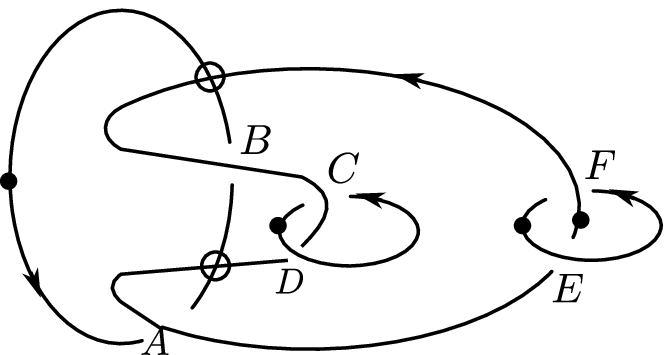}
\caption{}
\label{fig.7}
\end{center}
\end{figure}
\end{example}

\begin{example} \label{counter.ex}
Let $p=ABCD|DAEFBC|EF|\emptyset$, where $|A|=|C|=|F|=a+$, $|B|=|D|=|E|=b+$.
This corresponds to the link illustrated in Fig.\ \ref{fig.e3}.
Then since $w_1^\varepsilon=BD$, 
\begin{align*}
\rho^2(w_1^\varepsilon) &=BD \\
\rho^3(w_1^\varepsilon) &=E^{-1}A^{-1}BAED. 
\end{align*}
Thus we have
\begin{align*}
\eta (\rho^2(w_1^\varepsilon)) &=a_2a_2 \\
\eta(\rho^3(w_1^\varepsilon)) &=a_3^{-1}a_1^{-1}a_2a_1a_3a_2. 
\end{align*}
Therefore $\mu(2,1)=2$, $\mu(3,1)=0$ and $\mu(2,3,1)=1$.  
Similarly $w_2^\varepsilon=AEC$ and $w_3^\varepsilon=F$.
Thus we have 
\begin{align*}
\eta (\rho^2(w_2^\varepsilon)) &=a_1a_3a_1 \\
\eta(\rho^2(w_3^\varepsilon)) &=a_2. 
\end{align*}
Therefore $\mu(1,2)=2$, $\mu(3,2)=1$, $\mu(1,3)=0$ and $\mu(2,3)=1$.
So we have $\Delta(2,3,1)=1$.
As a result, $\bar{\mu}(2,3,1) \equiv 0 \pmod1$.

\begin{figure}[H]
\begin{center}
\includegraphics[scale=0.6, angle=0]{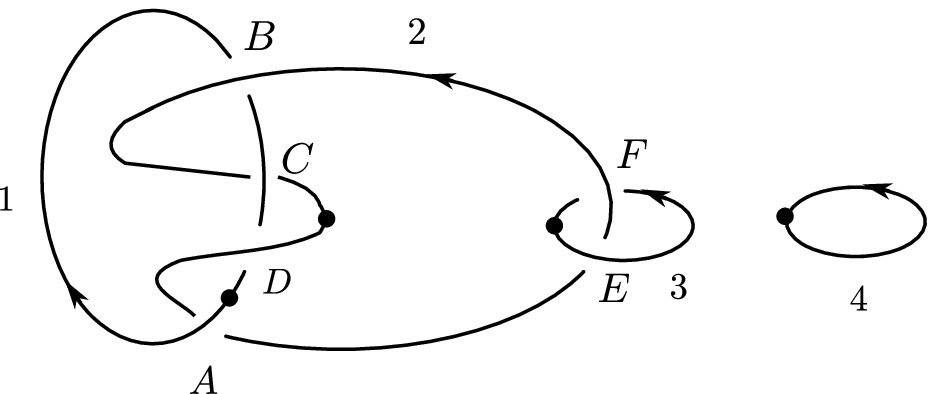}
\caption{}
\label{fig.e3}
\end{center}
\end{figure}
\end{example}

\begin{remark}\label{counter}
Our definition is different from that of Dye and Kauffman.  
The Dye-Kauffman's definition should have included "up to cyclic permutations" in their definition of  $ \Delta $.  
This would eliminate the dependence on base points.  
(Change a base point for Example \ref{counter.ex})
\end{remark}

\begin{example}
We consider two Gauss diagrams in Fig.\ \ref{fig.e6}, which represent virtual string links, respectively.
The closures of these virtual string links both coincide with the virtual link in Fig.\ \ref{fig.7}.
Using Kravchenko-Polyak's definition, we compute $\mu$ and compare it with our definition.
\begin{align*}
{\mu}(2, 3, 1)(G_1) &=Z_{23,1}(G_1) \\
&= \left\langle  \includegraphics[scale=0.3, angle=0]{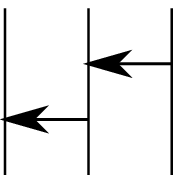} + \includegraphics[scale=0.3, angle=0]{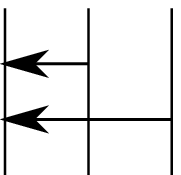} - \includegraphics[scale=0.3, angle=0]{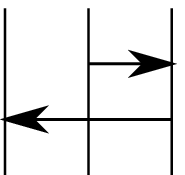}, G_1\right\rangle \\
&=-1 + 0 + 0 = -1.
\end{align*}
On the other hand, 
\begin{align*}
{\mu}(2, 3, 1)(G_2) &= Z_{23,1}(G_2) \\
&= \left\langle  \includegraphics[scale=0.3, angle=0]{arrowdia1.eps} + \includegraphics[scale=0.3, angle=0]{arrowdia2.eps} - \includegraphics[scale=0.3, angle=0]{arrowdia3.eps}, G_2\right\rangle \\
& =0 + 0 + 0 = 0.
\end{align*}
Modulo lower degree invariants of Kravchenko-Polyak's definition, that is the greatest common divisor of ${\mu}(2, 3)$, ${\mu}(3, 2)$, ${\mu}(2, 1)$, ${\mu}(1, 2)$, ${\mu}(3, 1)$ and ${\mu}(1, 3)$, both ${\mu}(2, 3, 1)(G_1)$ and  ${\mu}(2, 3, 1)(G_2)$ coincide with $\bar{\mu}(2, 3, 1)$ of the virtual link shown in Fig.\ \ref{fig.7}.
\begin{figure}[H]
\begin{center}
\includegraphics[scale=0.4, angle=0]{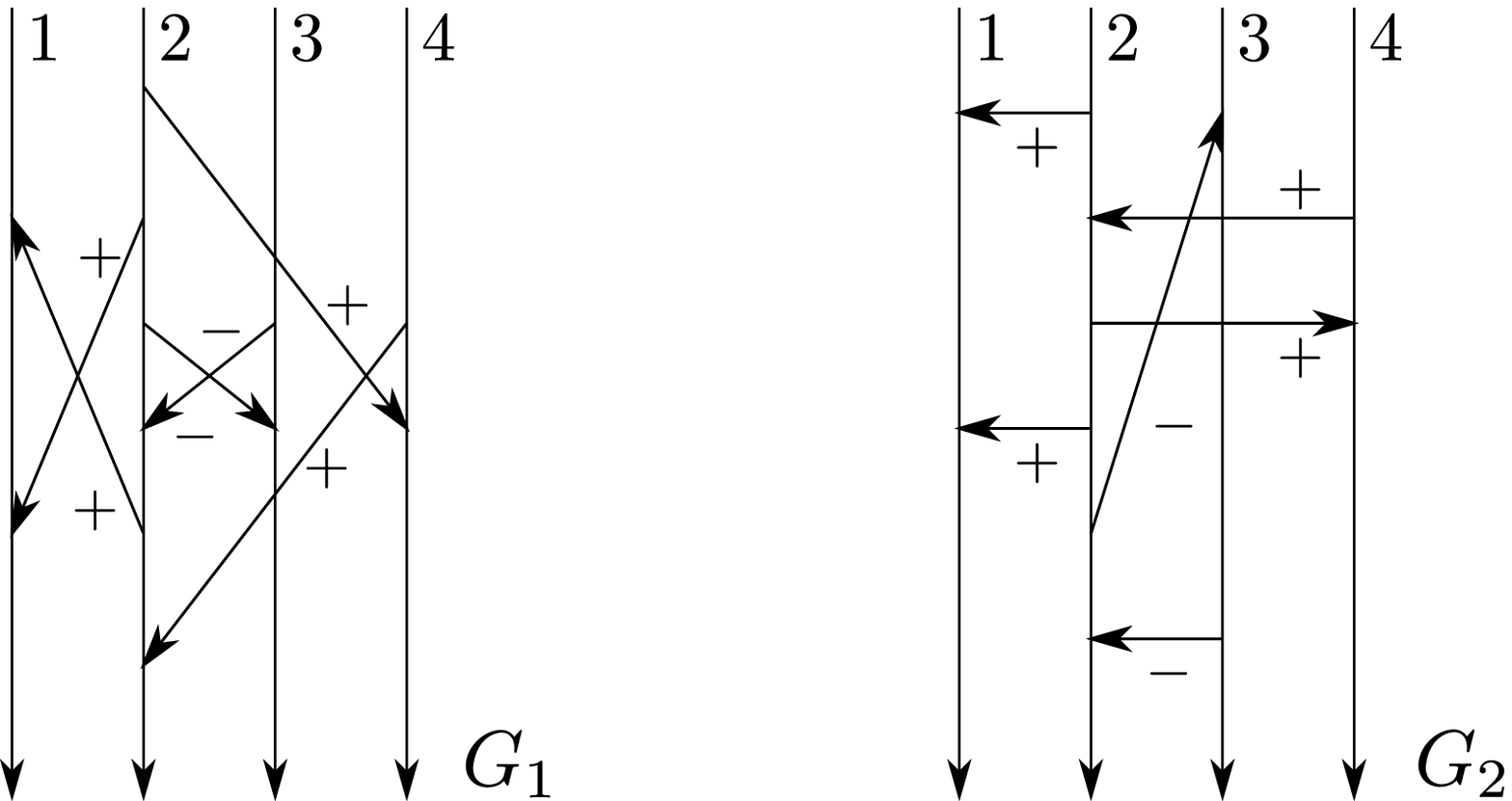}
\caption{}
\label{fig.e6}
\end{center}
\end{figure}
\end{example}

\section{Preliminaries to the proof of invariance}
It is sufficient to prove that for any two nanophrases $p$ and $p'$ related by either isomorphisms, H1 moves, H2 moves, H3 moves,
shift moves, or self crossing moves, and for each sequence $c_1, c_2, \ldots ,c_u,i$ of distinct integers between $1$ and $n$, 
$\bar{ \mu} (p; c_1, c_2, \ldots ,c_u,i)$ is equal to $\bar{ \mu}(p'; c_1, c_2, \ldots ,c_u,i)$. 
It is easy to see that this holds for an isomorphism.
To show invariance under each move we use graphs. 

Given a nanophrase  $p = w_1|w_2| \cdots |w_n$, 
we defined its signed $i$th component  
$w_i^{\varepsilon} = A_{i1}^{\varepsilon_{i1}}A_{i2}^{\varepsilon_{i2}} 
\cdots A_{im_i}^{\varepsilon_{im_i}}$  in the previous section.  

For each $i$, we will construct rooted trees  $T_{ij}^q(p) = T_{A_{ij}}^q(p)$ 
which correspond to expanding words  $\rho^q(A_{ij}^{\varepsilon_{ij}})$'s of  $A_{ij}^{\varepsilon_{ij}}$'s, 
and a forest $F_i^q=F_i^q(p)$ which is a sequence of rooted trees, by assembling  $T_{ij}^q$'s.
To do this, 
recall how we defined  $\rho^q(A_{ij}^{\varepsilon_{ij}})$  from   
$\rho^{q-1}(A_{ij}^{\varepsilon_{ij}})$.  
Since the operation of  $\rho$  on 
$\rho^{q-1}(A_{ij}^{\varepsilon_{ij}})$ 
is to insert several words in 
$\rho^{q-1}(A_{ij}^{\varepsilon_{ij}})$,   
$\rho^{q-1}(A_{ij}^{\varepsilon_{ij}})$  is obtained from 
$\rho^{q}(A_{ij}^{\varepsilon_{ij}})$  by deleting some subwords.  
If we denote this inclusive relation by  $<$,  
we have a natural inclusion of words, 
\begin{equation*} 
	\rho^{2}(A_{ij}^{\varepsilon_{ij}}) <  
	\rho^{3}(A_{ij}^{\varepsilon_{ij}}) < \cdots <    
	\rho^{q}(A_{ij}^{\varepsilon_{ij}}).  
\end{equation*} 
Then, 
we assign two natural numbers to each letter in  
the word  $\rho^{q}(A_{ij}^{\varepsilon_{ij}})$.  
One is the depth.  
To each letter in  $\rho^{q}(A_{ij}^{\varepsilon_{ij}})$, 
we assign the depth  $d$  if it appears in 
$\rho^{d+2}(A_{ij}^{\varepsilon_{ij}})$  but not in 
$\rho^{d+1}(A_{ij}^{\varepsilon_{ij}})$.   
The depth is an intrinsic invariant for a letter  
in  $\rho^{q}(A_{ij}^{\varepsilon_{ij}})$  independent from  $q$.  
The other is its location in  $\rho^{q}(A_{ij}^{\varepsilon_{ij}})$.  
We assign the location  $g$  to the  $g$th letter in  $\rho^{q}(A_{ij}^{\varepsilon_{ij}})$.  
Note that the location  $g$  depends on  $q$.  
We now define an increasing sequence of rooted trees, 
\begin{equation}\label{Eq:RootedTree}
	T_{ij}^2 \subset  
	T_{ij}^3 \subset \cdots \subset     
	T_{ij}^q  \subset \cdots .
\end{equation} 
The vertices of  $T_{ij}^q$  consists of 
letters in  $\rho^{q}(A_{ij}^{\varepsilon_{ij}})$.  
To each vertex, 
we assign its depth. 
We join two vertices  $v$  and  $w$  by a directed edge 
from  $v$  to  $w$  
if the depth  $d$  of  $v$  is equal to the depth of  $w$ minus  $1$  and 
moreover  $w$  is a letter that appears in the image of a signed letter corresponding to $v$ under $\rho^{3}$  but 
not under  $\rho^{2}$.  
Thus we get a sequence of rooted trees (\ref{Eq:RootedTree})  
based on the letter  $A_{ij}^{\varepsilon_{ij}}$  in  $w_i^{\varepsilon}$.  
The root of each tree is represented 
uniquely by  $A_{ij}^{\varepsilon_{ij}}$ of depth 0.

For further discussion, 
we assign a label to each vertex in  $T_{ij}^q$.  
It is a quadruple consisting of 
the location  $g$  of the letter in $\rho^{q}(A_{ij}^{\varepsilon_{ij}})$,  
its letter in  $\mathcal{A}$  without sign, 
the index  $k$  of its image under  $\eta$,
and the sign  $\varepsilon$ of its letter.
If we regard $T_{ij}^q$  as a labeled rooted tree,  
then, 
the vertices can be distinguished by the first label.   
On the other hand, 
the inclusive relation in  (\ref{Eq:RootedTree})  still does make 
sense if we ignore the first label.   

We now denote the forest of labeled rooted trees  $T_{ij}^q(p)$'s  
by  $F_i^q(p)$,  
where  $j$  ranges between  $1$  and  $m_i$.  
Note that  $F_i^q(p)$  contains  $m_i$  roots of depth $0$.

\begin{example}\label{ex2.555}
Consider the nanophrase $p=AB|CDB|DEA|FFCE $, where $|A|=|B|=|C|=b_+,|D|=b_-,|E|=|F|=a_-$ (Example $\ref{ex2.55}$).
Then we have
\begin{align*}
&\rho^2 (w_1^\varepsilon ) = AB, ~~\rho^3 (w_1^\varepsilon ) = EDAD^{-1}E^{-1}C^{-1}BC,  \\
&\rho^4 (w_1^\varepsilon ) = EC^{-1}DCAC^{-1}D^{-1}CE^{-1}C^{-1}BC, \\
&\eta (\rho^4(w_1^\varepsilon)) = a_4a_4^{-1}a_2a_4a_3a_4^{-1}a_2^{-1}a_4a_4^{-1}a_4^{-1}a_2a_4.
\end{align*}
Then $F_1^4(p)$ is the following forest.
\begin{figure}[H]
\begin{center}
\includegraphics[scale=0.7]{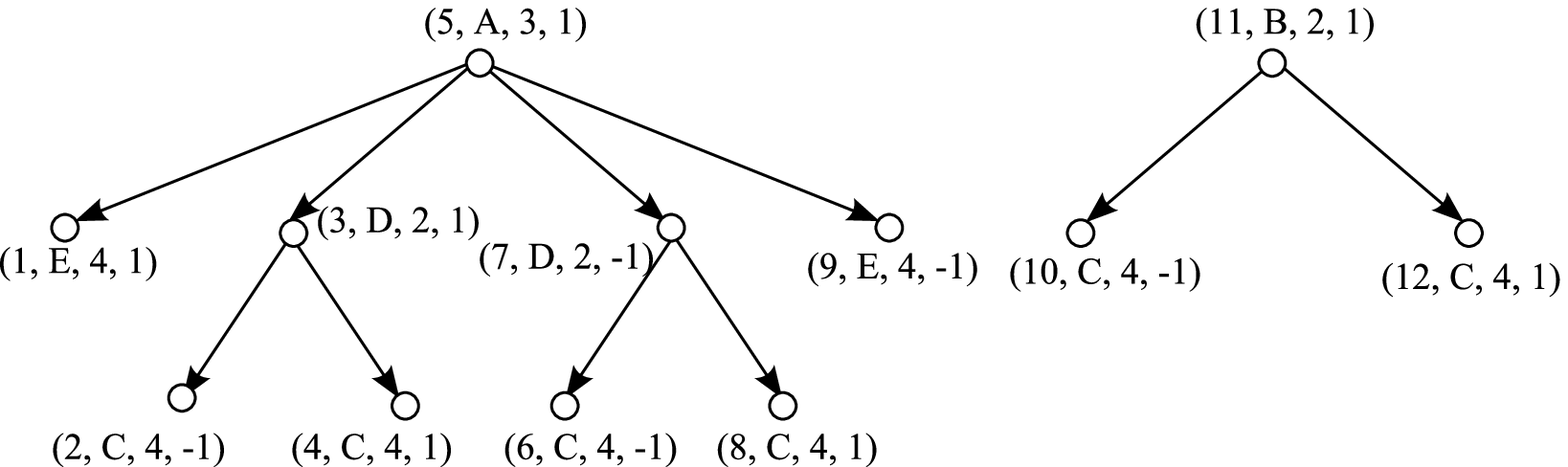}
\end{center}
\end{figure}
\end{example}

Let $F$ be a forest of rooted trees with labels in $\mathbb{N} \times \mathcal{A} \times \{1,2, \cdots ,n \} \times \{\pm1 \}$ 
such that vertices can be distinguished by the first label.  
Then, for any sequence  $c_1, \, c_2, \cdots, c_u$  of pairwise different integers between 
$1$  and  $n$, 
let  $\mathcal{S}(F, c_1, \cdots, c_u)$  be the set of 
subforests of  $F$  satisfying the following conditions: 
\begin{enumerate}
\item
	Each member of  $\mathcal{S}(F, c_1, c_2, \cdots, c_u$)  has  $u$  vertices 
with $c_1, \, c_2, \cdots, c_u$, respectively, as the third label.  
\item 
Let $d_j$ denote the distinguishing the first label of the vertex with $c_j$  as the third label.
Then $d_1 < d_2 < \cdots < d_u$.
\item
	Each connected component contains a root.  
\end{enumerate} 
In the rest of the paper, we call $F$ a {\it labeled forest} instead of a forest of rooted trees with labels.

We then define sets  $\mathcal{S}_e(F, c_1, \cdots, c_u)$  and  $\mathcal{S}_o(F, c_1, \cdots, c_u)$  as follows.  
For any sequence $c_1, c_2, \cdots, c_u$ of pairwise different
integers  between  $1$  and  $n$,  
let  $\mathcal{S}_e(F, c_1, \cdots, c_u)$  and  $\mathcal{S}_o(F, c_1, \cdots, c_u)$  
be subsets of  $\mathcal{S}(F, c_1, \cdots, c_u)$  which 
consists of subforests for which the cardinality of 
vertices with  $\varepsilon_j = -1$  is even or odd respectively.  
Note that  $\varepsilon_j$  is the sign of the fourth label.  
Also note that 
we have the identity 
\begin{equation*}
	\mathcal{S}(F, c_1, \cdots, c_u) = 
	\mathcal{S}_o(F, c_1, \cdots, c_u) \sqcup
	\mathcal{S}_e(F, c_1, \cdots, c_u). 
\end{equation*}

\begin{example}\label{ex2.5555}
Consider the nanophrase $p=AB|CDB|DEA|FFCE $, where $|A|=|B|=|C|=b_+,|D|=b_-,|E|=|F|=a_-$ (Example $\ref{ex2.555}$).
Then we have the following, where we note that we write only the first label. 
\begin{figure}[H]
\begin{center}
\includegraphics[scale=0.3]{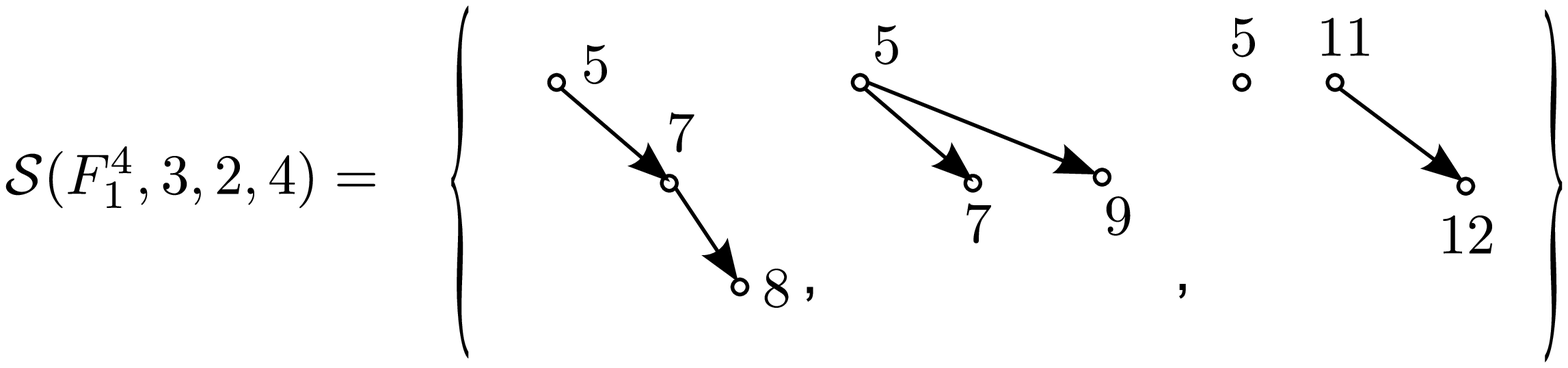}
\end{center}
\end{figure}

\begin{figure}[H]
\begin{center}
\includegraphics[scale=0.3]{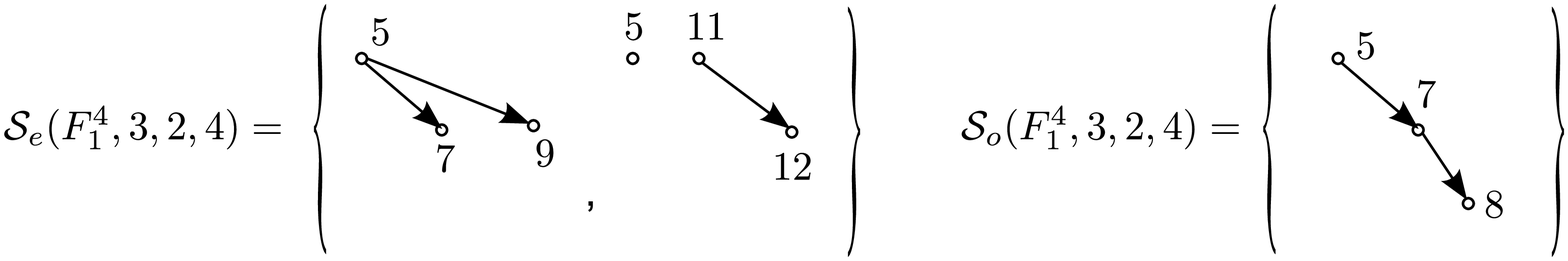}
\end{center}
\end{figure}
\end{example}

Let $E$ denote the set of all power series $\sum \lambda  (e_1, e_2, \ldots ,e_v) \kappa _{e_1} \kappa _{e_2}  \ldots \kappa _{e_v}$ 
in $\mathbb{Z} [[\kappa_1,\kappa_2, \ldots, \kappa_n]]$ such that 
at least two indices in $\{ e_1, e_2, \ldots ,e_v \}$ coincide.
It is easy to see that $E$ is a two-sided ideal.

\begin{lemma}\label{lem0} 
If the indices $c_1, c_2, \ldots ,c_u$ are pairwise distinct, then for any $i$ and $q$ the coefficient of $\kappa _{c_1} \cdots \kappa _{c_u}$ in $\varphi \circ \eta (\rho^q (w_i^\varepsilon))$
is equal to the cardinality of $\mathcal{S}_{\text{e}}({F}_i^q,c_1, \ldots,c_{u})$ minus the cardinality of $\mathcal{S}_{\text{o}}({F}_i^q,c_1, \ldots,c_{u})$. 
\end{lemma}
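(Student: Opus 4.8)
The plan is to read off the coefficient of $\kappa_{c_1}\cdots\kappa_{c_u}$ directly from the power series $\varphi\circ\eta(\rho^q(w_i^\varepsilon))$ and then reinterpret the resulting signed count of position selections as the signed count of subforests appearing on the right-hand side. I would begin by recording the two structural facts that drive everything. Since $\varphi(a_h)=1+\kappa_h$ and $\varphi(a_h^{-1})=(1+\kappa_h)^{-1}$, the composite $\varphi\circ\eta$ is a monoid homomorphism from words on $\mathcal{A}\cup\mathcal{A}^{-1}$ (under concatenation) into the group of power series with constant term $1$, and it sends the formal inverse of a word to the inverse power series. Moreover $\rho^q$ commutes with inversion, i.e.\ $\rho^q(u^{-1})=(\rho^q(u))^{-1}$, which follows by induction from $\rho^q(A^\pm_{ij})=\rho^{q-1}(x_{ij}^{-1})A^\pm_{ij}\rho^{q-1}(x_{ij})$. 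Thus, writing $Q=\varphi\circ\eta(\rho^{q-1}(x_{ij}))$, one has $\varphi\circ\eta(\rho^{q-1}(x_{ij}^{-1}))=Q^{-1}$, so the expanding word of a single letter is sent to a conjugate $Q^{-1}aQ$ with $a=\varphi(\eta(A_{ij}^{\varepsilon_{ij}}))$.

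Next I would extract the coefficient combinatorially. Writing $\eta(\rho^q(w_i^\varepsilon))=a_{k_1}^{\delta_1}\cdots a_{k_N}^{\delta_N}$, each factor $\varphi(a_{k_s}^{\delta_s})$ contributes either $1$ or a power of the single variable $\kappa_{k_s}$. Because $c_1,\dots,c_u$ are pairwise distinct, the monomial $\kappa_{c_1}\cdots\kappa_{c_u}$ is square-free, so only the degree-one term $\pm\kappa_{k_s}$ of a chosen factor can be used; hence the coefficient equals $\sum(-1)^{\#\{\text{negatives}\}}$ over all choices of positions $s_1<\cdots<s_u$ with $k_{s_j}=c_j$. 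Since the positions of the word are exactly the vertices of $F_i^q$, with the location as first label, the $\eta$-index as third label and the sign as fourth label, such a choice is precisely a $u$-vertex subset of $F_i^q$ fulfilling conditions (1) and (2) defining $\mathcal{S}$. It remains to match condition (3) and the signs.

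The key observation is that condition (3) is equivalent to the chosen subset being \emph{ancestor-closed} (every selected non-root vertex has its parent selected): in a rooted tree the component of a vertex meets a root exactly when its whole upward path is selected. So I must show that selections violating ancestor-closedness contribute $0$, while the ancestor-closed ones reproduce $|\mathcal{S}_e|-|\mathcal{S}_o|$. I would prove this by induction on $q$ for a single tree $T_{ij}^q$, the forest case following from multiplicativity of $\varphi\circ\eta$ and the fact that the trees of $F_i^q$ occupy disjoint location intervals in increasing order. In the inductive step the conjugation identity
\[
\varphi\circ\eta(\rho^q(A_{ij}^{\varepsilon_{ij}}))=Q^{-1}aQ=1+Q^{-1}\alpha Q,\qquad \alpha=a-1,
\]
does the work: the summand $1$ annihilates every selection that omits the root (these are exactly the non-ancestor-closed selections lying in the two flanking forests), while $Q^{-1}\alpha Q$ forces the root index $k_0$ to occupy one slot $c_m$, splitting the monomial as $(\kappa_{c_1}\cdots\kappa_{c_{m-1}})\,\kappa_{k_0}\,(\kappa_{c_{m+1}}\cdots\kappa_{c_u})$, whose left block is read off from $Q^{-1}$ and right block from $Q$. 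Applying the induction hypothesis to $Q$ and $Q^{-1}$ then matches the decomposition of an ancestor-closed selection of $T_{ij}^q$ into the root together with ancestor-closed selections of the left and right descendant forests, and the signs multiply correctly because $\#\{\text{negatives}\}$ is additive over the three pieces.

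The main obstacle is precisely this cancellation encoded in condition (3): a priori every position selection contributes $\pm1$, and one must see that the ``unanchored'' selections sum to zero. The conjugation identity $Q^{-1}aQ=1+Q^{-1}\alpha Q$ is the exact device producing this cancellation, and the crux is to organize the induction so that the cancellation occurs simultaneously at every depth, rather than attempting to pair up bad selections by hand.
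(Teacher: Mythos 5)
Your proposal is correct and follows essentially the same route as the paper: both hinge on the identity $Q^{-1}(1+\varepsilon\kappa)Q=1+Q^{-1}\varepsilon\kappa Q$ (with $Q=\varphi\circ\eta(\rho^{q-1}(x_{ij}))$ and its inverse coming from $x_{ij}^{-1}$), applied inductively depth by depth, so that the surviving terms biject with root-containing (i.e.\ ancestor-closed) subforests with sign given by the parity of the negative fourth labels, while square-freeness of $\kappa_{c_1}\cdots\kappa_{c_u}$ lets one work modulo $E$. Your version merely makes explicit two facts the paper leaves implicit, namely that $\rho^q(u^{-1})=(\rho^q(u))^{-1}$ and that $\varphi\circ\eta$ carries formal inverses to inverse power series, which is what justifies the paper's ``cancel out the first term'' step.
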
 
\begin{proof}
We consider the coefficient of $\kappa _{c_1} \cdots \kappa _{c_u}$ in $\varphi \circ \eta (\rho^q (w_i^\varepsilon))$.
$\varphi \circ \eta ( \rho^q (w_i^\varepsilon ) )$ is the product of some elements either of the form $(1+ \kappa _{c})$ or $(1 - {\kappa _{d}} + {\kappa _{d}}^2 - {\kappa _{d}}^3 + \cdots $) in $\mathbb{Z} [[\kappa_1,\kappa_2, \ldots, \kappa_n]]$, where $1 \leq c,d \leq n$.
Since the indices $c_1, c_2, \ldots ,c_u$ are pairwise distinct by assumption,
it is sufficient to consider the coefficient of $\kappa _{c_1} \cdots \kappa _{c_u}$ modulo $E$.
Therefore we regard $1 - {\kappa _{}} + {\kappa _{}}^2 - {\kappa _{}}^3 + \cdots $ as $1 - {\kappa _{}}$ modulo $E$.
Let $\eta (A_{ij}) = a_{e_j}$, then 
\[ \varphi \circ \eta (A_{ij}^{\varepsilon_{ij}}) \equiv 1+ \varepsilon_{ij} \kappa_{e_j} \mod{E}. \]
On the other hand, $\rho^q (w_i^\varepsilon )$ is the product of $\rho^q (A_{ij}^{\varepsilon_{ij}} )$'s ($1 \leq j \leq m_i$).
Moreover by the definition of $\rho$, 
\[ \rho^q (A_{ij}^{\varepsilon_{ij}} ) = \rho^{q-1}(x_{ij}^{-1})A_{ij}^{\varepsilon_{ij}} \rho^{q-1}(x_{ij}). \]
Then the only letter with depth 0 in $\rho^q (A_{ij}^{\varepsilon_{ij}} )$ is $A_{ij}^{\varepsilon_{ij}}$ in the right hand side of this identity.
We deduce that 
\begin{align}
\varphi \circ \eta (\rho^q (A_{ij}^{\varepsilon_{ij}})) \equiv \varphi \circ \eta (\rho^{q-1}(x_{ij}^{-1})) \cdot (1+ \varepsilon_{ij} \kappa_{e_j}) \cdot \varphi \circ \eta (\rho^{q-1}(x_{ij})) 
\mod{E},  \label{expand} 
\end{align}
where $e_j$ is some integer between 1 and $n$. 
First of all, we expand the term corresponding to the letter with depth 0, that is $(1+ \varepsilon_{ij}{\kappa} _{e_j})$ in (\ref{expand}).
Then we have 
\begin{align}
\varphi \circ \eta (\rho^{q-1}(x_{ij}^{-1})) \cdot 1 \cdot \varphi \circ \eta (\rho^{q-1}(x_{ij})) + \varphi \circ \eta (\rho^{q-1}(x_{ij}^{-1})) \cdot \varepsilon_{ij} \kappa_{e_j} \cdot \varphi \circ \eta (\rho^{q-1}(x_{ij})), \label{expand2} 
\end{align}
Secondly, we cancel out the first term in (\ref{expand2}).
Then we have 
\begin{align*}
1 + \varphi \circ \eta (\rho^{q-1}(x_{ij}^{-1})) \cdot \varepsilon_{ij} \kappa_{e_j} \cdot \varphi \circ \eta (\rho^{q-1}(x_{ij})). 
\end{align*}
We repeat these two steps according to depths, inductively.
Then it follows from how to expand the expression and how to construct the tree $T_{ij}^q$ 
that there exists a natural bijection between a term in this expanded expansion and a connected subtree of $T_{ij}^q$ which contains the root.
The coefficient of each term in this expanded expression is $\pm 1$.
On the other hand, for each term in this expanded expansion, we consider vertices of the subtree corresponding to it.
The number of vertices with $-1$ as the fourth label is even (resp. odd) if and only if the coefficient of the term in this expansion is 1 (resp. $-1$). 
Similar considerations apply to a relation between $\varphi \circ \eta (\rho^q (w_i^\varepsilon))$ and a forest $F_i^q$.  
Therefore 
\begin{align*}
\text{ the coefficient of } \kappa _{c_1} \cdots \kappa _{c_u} & \text{ in } \varphi \circ \eta (\rho^q (w_i^\varepsilon )) \\
&= \# \mathcal{S}_{\text{e}}({F}_i^q,c_1, \ldots ,c_{u}) - \# \mathcal{S}_{\text{o}}({F}_i^q,c_1, \ldots ,c_{u}).
\end{align*}
\end{proof}


\begin{example}\label{2.55555}
Consider the nanophrase $p=AB|CDB|DEA|FFCE $, where $|A|=|B|=|C|=b_+,|D|=|E|=b_-,|F|=a_-$ (Example $\ref{ex2.55}$). 
Then recall from Example $\ref{ex2.5}$ that we have 
\[ \rho^3(w_3^\varepsilon) = {C}^{-1}{D}^{-1}CE^{-1}, \]
where the depths of $D^{-1}$ and $E^{-1}$ are 0, and the depths of $C$ and $C^{-1}$ are 1. 
Thus,
\begin{align*}
\eta (\rho^3(w_3^\varepsilon)) &= {a}_4^{-1}{a}_2^{-1}a_4{a}_4^{-1},   \\
\varphi \circ \eta (\rho^3(w_3^\varepsilon)) &=(1- {\kappa _4} + \kappa _4^2 - \cdots)(1- \kappa _2 + \kappa _2^2 - \cdots)(1+\kappa _4)(1- \kappa _4 + \kappa _4^2 - \cdots).  
\end{align*}
Therefore 
\begin{align}
\varphi \circ & \eta (\rho^3(w_3^\varepsilon))  \notag \\
&  \equiv (1- {\kappa _4})(1- \kappa _2 )(1+\kappa _4)(1- \kappa _4) \mod{E} \label{l.e1}  \\
\begin{split}
& = (1- {\kappa} _4)1(1+\kappa _4)1 + (1- {\kappa} _4)1(1+\kappa _4)(- {\kappa} _4)  \\
& \hspace{3em} + (1- {\kappa} _4)(- {\kappa} _2)(1+\kappa _4)1 + (1- {\kappa} _4)(- {\kappa} _2)(1+\kappa _4)(- {\kappa} _4) \label{l.e2} 
\end{split} \\
& \equiv 1 -{\kappa} _4 + (1- {\kappa} _4)(- {\kappa} _2)(1+\kappa _4) + (1- {\kappa} _4)(- {\kappa} _2)(1+\kappa _4)(- {\kappa} _4)  \mod{E} \label{l.e3} \\
\begin{split}
&= 1 -{\kappa} _4  + \Bigl( - {\kappa} _2 - {\kappa} _2\kappa _4 + {\kappa} _4 {\kappa} _2 +  {\kappa} _4 {\kappa} _2\kappa _4 \Bigr) \\
& \hspace{9em}+  \Bigl( {\kappa}_2 {\kappa} _4 + {\kappa} _2\kappa _4 {\kappa} _4 - {\kappa}_4{\kappa} _2{\kappa} _4 - {\kappa} _4{\kappa} _2\kappa _4{\kappa} _4 \Bigr).  \label{l.e4} 
\end{split}  
\end{align}
Here for the right hand term of (\ref{l.e1}), we expand the terms corresponding to letters with depth 0, that is the second and fourth terms.
Moreover for the right hand term of (\ref{l.e2}), we simplify the expansion by canceling them out.
In other words, we change $(1-\kappa _4)1(1+\kappa _4)$ into 1.
In addition, for the right hand term of (\ref{l.e3}), we expand the terms corresponding to letters with depth 1, that is $(1-\kappa _4)$ and $(1+\kappa _4)$. 

On the other hand, we have
\begin{figure}[H]
\begin{center}
\includegraphics[scale=0.5]{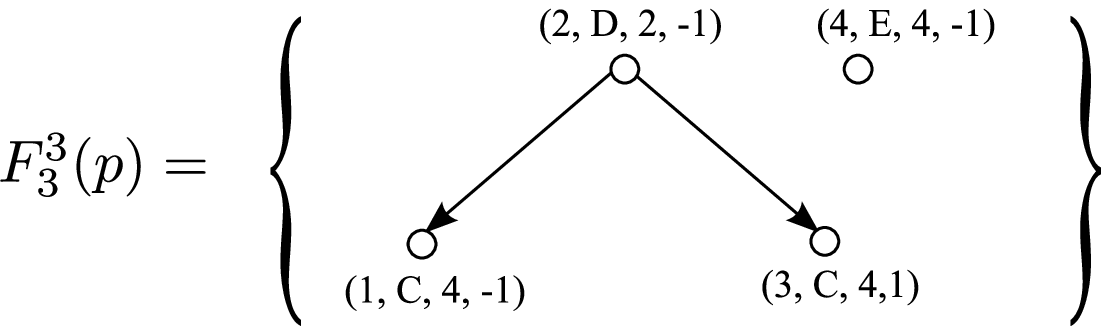}
\end{center}
\end{figure}

The right hand term of (\ref{l.e4}) corresponds to the subforests as follows, where we note that we write only the first label.
\begin{figure}[H]
\begin{center}
\includegraphics[scale=0.3]{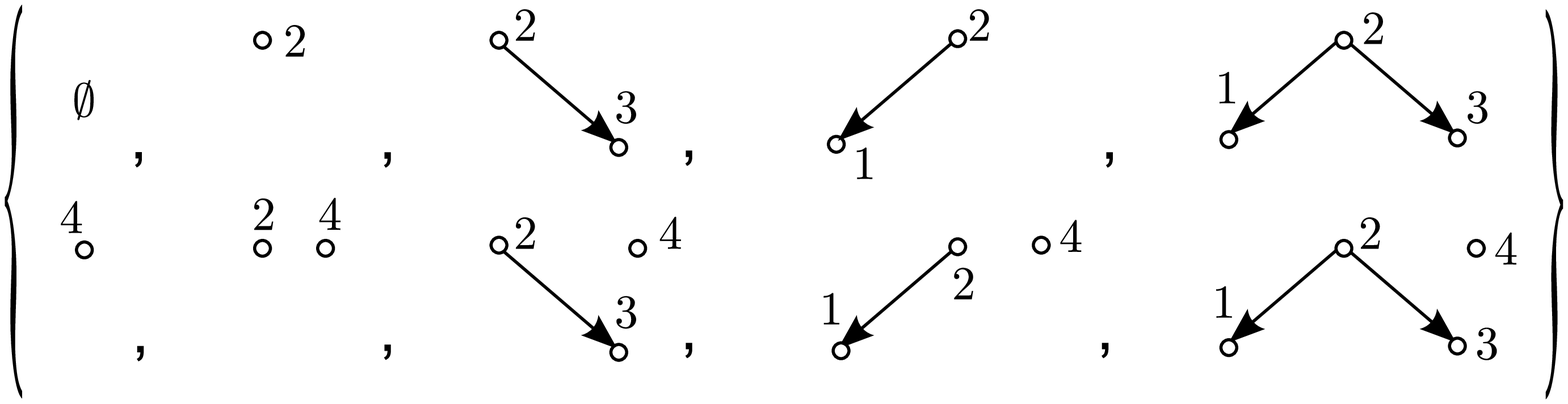}
\end{center}
\end{figure}
\end{example}
Then, for example, the sum of the coefficient of $\kappa_2 \kappa_4$ in (\ref{l.e4}) is 0.
On the other hand, 
$\mathcal{S}_{\text{e}}({F}_3^3, 2,4)$ is the set which consists of a single forest, which has only two roots labeled 2 and 4 and no descendant (second term of the second row in the above set).
$\mathcal{S}_{\text{o}}({F}_3^3, 2,4)$ is the set which consists of a single forest, which has only one root labeled 2, with one descendant labeled 3 (third term of the
first row in the above set).
Therefore we have 
\[ \# \mathcal{S}_{\text{e}}({F}_3^3, 2,4) - \# \mathcal{S}_{\text{o}}({F}_3^3, 2,4) =1-1 = 0. \] 

\section{Proof of invariance for H1, H2, H3 and self crossing moves}

\begin{proposition}\label{prop1}
If the indices $c_1, c_2, \ldots ,c_u,i$ are a sequence of integers between 1 and $n$, then $\mu (p;c_1, \ldots ,c_{u},i)=\mu (p';c_1, \ldots ,c_{u},i)$ if $p$ and $p'$ are related by an H1 move. 
\end{proposition}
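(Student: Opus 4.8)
The plan is to analyze precisely how an H1 move changes the combinatorial data that feeds into $\mu$, and then argue that the invariant $\varphi\circ\eta(\rho^q(w_i^\varepsilon))$ is unaffected through the relevant degree. Recall that an H1 move changes $(\mathcal A,xAAy)\leftrightarrow(\mathcal A-\{A\},xy)$ on a single component, so it deletes (or inserts) a letter $A$ whose two occurrences are adjacent \emph{within the same component}. The first step is to observe that since both occurrences of $A$ lie in the same component, $A$ contributes to no off-diagonal data between distinct components; in particular $\eta$ of each occurrence of $A$ records the order of the component containing $A$ itself. I would first compute the sign $\varepsilon$ assigned to the two occurrences of $A$: by the definition of $w_i^\varepsilon$, one of the two occurrences of a same-component letter always receives sign $0$ (the sign $0$ rule for the "second" or "first" occurrence depending on $|A|$), so at most one occurrence of $A$ survives into the signed word $w_i^\varepsilon$.

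The key step is then to trace the effect of deleting the pair $AA$ on the expanding words $\rho^q$. Here the crucial structural fact is that the two $A$'s are \emph{adjacent}, so the truncation data $x_{ij}$ (the signed word obtained by truncating $w_k^\varepsilon$ at the paired occurrence) is essentially trivial for the surviving occurrence: the word lying strictly between the two $A$'s is empty, so when the recursion $\rho^q(A^\pm)=\rho^{q-1}(x^{-1})A^\pm\rho^{q-1}(x)$ is applied, the inserted conjugating words attached to $A$ contribute the \emph{same} letters that would have been inserted at the neighboring position before the deletion. I would show that the rooted forest $F_i^q(p)$ is obtained from $F_i^q(p')$ by adjoining subtrees rooted at the $A$-vertex whose $\eta$-label is $a_i$ (the index $i$ of the component itself), together with its mirror-image counterpart coming from the $x^{-1}$ side, and that these extra vertices and their descendants cancel in pairs with respect to the even/odd count $\#\mathcal S_e-\#\mathcal S_o$ from Lemma~\ref{lem0}.

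The cleanest route is to avoid trees and instead argue at the level of the power series modulo $E$. Since the two occurrences of $A$ give, modulo $E$, a factor of the form $(1\pm\kappa_i)$ paired with its inverse $(1\pm\kappa_i)^{-1}$ sitting as a conjugating pair $\rho^{q-1}(x^{-1})\,A^\pm\,\rho^{q-1}(x)$ around the adjacent letter, I would show these cancel: the contribution of the deleted letter is a conjugation by an element and its formal inverse, which leaves $\varphi\circ\eta(\rho^q(w_i^\varepsilon))$ unchanged up to the degree under consideration. Because $\mu(p;c_1,\dots,c_u,i)$ depends only on this power series, invariance of $\mu$ (not merely $\bar\mu$) follows immediately, which is exactly what the proposition asserts.

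The main obstacle I anticipate is bookkeeping the sign $\varepsilon$ and the $\eta$-labels carefully across the four cases $|A|\in\{a_+,a_-,b_+,b_-\}$, and verifying that the conjugating words $\rho^{q-1}(x)$ and $\rho^{q-1}(x^{-1})$ attached to the two $A$-positions really are formal inverses of one another once the empty gap between the adjacent $A$'s is taken into account; the recursion defining $\rho$ is asymmetric in $q$, so confirming the cancellation holds at every finite stage $q$ (and hence in the limit) rather than only formally will require a short induction on $q$.
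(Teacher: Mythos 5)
Your proposal goes wrong at its very first step, and the error propagates through everything that follows. You claim that for a letter $A$ whose two occurrences lie in the same component, one of the two occurrences ``always receives sign $0$'' so that ``at most one occurrence of $A$ survives into the signed word $w_i^\varepsilon$.'' In fact \emph{both} occurrences receive sign $0$: the rule assigns $\varepsilon_{ij}=\pm1$ only when the other occurrence $A_{kl}$ lies in a \emph{different} component (the conditions are $i<k$ or $i>k$), and in the remaining case $i=k$ the definition gives $\varepsilon_{ij}=0$ for both occurrences --- equivalently, the sign is nonzero only ``if the letter $A$ appears exactly once in the $i$th component.'' Compare the letter $F$ in Example~\ref{ex2}, where $w_4^\varepsilon=\emptyset$. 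Consequently $A$ contributes $A^0=\emptyset$ to the signed component, never appears in any truncation word $x_{kl}$ (these are subwords of the signed words $w_k^\varepsilon$), and hence never appears in any expanding word $\rho^q(w_i^\varepsilon)$. That single observation is the entire proof, and it is exactly the paper's proof; the adjacency of the two $A$'s plays no role beyond making the move an H1 move in the first place.

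Moreover, if your premise were correct, the rest of your argument would not rescue it. A surviving occurrence of $A$ with sign $\pm1$ would insert a genuine factor $(1\pm\kappa_i)$ at a single position of $\varphi\circ\eta(\rho^q(w_i^\varepsilon))$; there is no inverse factor anywhere for it to cancel against, and such a factor really would change coefficients (for instance $\mu(p;i,i)$ would jump by $\pm1$, and the proposition as stated allows repeated indices). The conjugation--cancellation mechanism you invoke --- a subword flanked by $\rho^{q-1}(x^{-1})$ and $\rho^{q-1}(x)$ mapping to $1$ under $\varphi\circ\eta$ --- is real, but it is the mechanism of the paper's argument for the H2 move (Case 2 of Proposition~\ref{prop2}), where the two letters of the cancelling pair sit in \emph{different} components; it has no counterpart here, because a letter with sign $0$ is never expanded by $\rho$ at all. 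The correct fix is therefore not more careful bookkeeping over the four cases of $|A|$, nor an induction on $q$, but simply reading off from the definition that both signs vanish.
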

\begin{proof}
Let $p'$ be $w'_1|w'_2| \cdots |w'_n$.
Let us $p=xAAy$ and $p'=xy$.
The letter $A$ appears twice in a certain component.
It follows from the definition of $\rho^q (w_i^\varepsilon )$ that the letter $A$ is not contained in $\rho^q (w_i^\varepsilon )$ and thus does not affect $\rho^q (w_i^\varepsilon )$.
Therefore $\rho^q (w_i^\varepsilon )$ is equal to $\rho^q ({w'_i}^\varepsilon )$, and so for any sequence $c_1, c_2, \ldots ,c_u,i$ $\mu (p; c_1, c_2, \ldots ,c_u,i)$ is equal to $\mu (p'; c_1, c_2, \ldots ,c_u,i)$. 
\end{proof}

\begin{proposition}\label{prop2}
If the indices $c_1, c_2, \ldots ,c_u,i$ are a sequence of integers between 1 and $n$, then $\mu (p;c_1, \ldots ,c_{u},i)=\mu (p';c_1, \ldots ,c_{u},i)$ if $p$ and $p'$ are related by an H2 move. 
\end{proposition}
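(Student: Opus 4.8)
The plan is to work with the composite $\Phi := \varphi \circ \eta \colon \mathcal{L} \to \mathbb{Z}[[\kappa_1,\ldots,\kappa_n]]$ and to prove the stronger statement that $\Phi(\rho^q(w_i^\varepsilon))$ is unchanged by an H2 move for every component index $i$ and every $q$; since $\mu$ is read off from $\lim_q \Phi(\rho^q(w_i^\varepsilon))$, this yields the proposition. Two elementary facts drive everything. First, $\Phi$ is multiplicative on words and $\varphi(a_h)\varphi(a_h^{-1}) = \varphi(a_h^{-1})\varphi(a_h) = 1$, so $\Phi(W)\Phi(W^{-1}) = 1$ for every word $W$. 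Second, I would establish by induction on $q$ the key identity $\Phi(\rho^q(W))\,\Phi(\rho^q(W^{-1})) = 1$: writing $\rho^q(A^{\varepsilon}) = \rho^{q-1}(x^{-1}) A^{\varepsilon} \rho^{q-1}(x)$ and using that $A^\varepsilon$ and its inverse carry the \emph{same} truncation word $x$, the inner pair $\Phi(\rho^{q-1}(x))\Phi(\rho^{q-1}(x^{-1}))$ collapses by the inductive hypothesis, then the letter factor collapses, then the outer pair collapses.

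Next I would pin down the effect of the move on the signed components. Since the block $AB$ is read before the block $BA$, the component $c$ containing $AB$ satisfies $c \le c'$, where $c'$ denotes the component containing $BA$. If $c = c'$, then both occurrences of $A$ lie in one component, as do both of $B$, so all four signs are $0$; the letters $A,B$ contribute $\emptyset$ to every signed word and to every truncation word, whence $\rho^q(w_i^\varepsilon)$ is literally unchanged. If $c < c'$, a short case check over the four possibilities for $(|A|,|B|)$ permitted by $\tau_v(|A|)=|B|$ shows that exactly one of the two blocks survives with nonzero signs, as an \emph{adjacent} pair, while the other block has both signs $0$. In each case the two surviving letters have equal image under $\eta$ (both $a_c$, or both $a_{c'}$) and, because the vanishing letter lying between their partners contributes $\emptyset$, equal truncation words $x_0$; so the surviving block is $L^{+1}M^{-1}$ or $M^{-1}L^{+1}$ with $\eta(L)=\eta(M)$ and $x_L = x_M = x_0$.

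Armed with this, I would show $\Phi(\rho^q(\text{surviving block})) = 1$: expanding $\rho^q$ of the two letters and using $x_L = x_M = x_0$, the key identity forces the inner $\Phi(\rho^{q-1}(x_0))\Phi(\rho^{q-1}(x_0^{-1}))$ to cancel, the letter factor $\varphi(a_h^{-1})\varphi(a_h)$ to cancel, and the outer $\Phi(\rho^{q-1}(x_0^{-1}))\Phi(\rho^{q-1}(x_0))$ to cancel, leaving $1$. It then remains to delete the block from $w_{c'}^\varepsilon$ (or $w_c^\varepsilon$) and conclude, and this is where I expect the main obstacle: removing the block shortens the truncation word of every letter whose partner lies further along the same component, so $\rho^q$ of those letters is not literally unchanged. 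I would resolve this by an induction on $q$ asserting that for every pair of ``corresponding'' signed prefixes $W(p) = u\,(\text{block})\,v$ and $W(p') = uv$ one has $\Phi(\rho^q(W(p))) = \Phi(\rho^q(W(p')))$: a letter of $u$ or $v$ either has an unaffected truncation word, or one that is again a corresponding prefix pair handled at level $q-1$, while the block factor contributes $1$ by the previous step. Applying this with $W = w_i^\varepsilon$ completes the argument; in the language of Lemma~\ref{lem0} it amounts to a sign-preserving bijection between the relevant subforests of $F_i^q(p)$ and $F_i^q(p')$.
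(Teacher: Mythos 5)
Your proposal is correct and follows essentially the same route as the paper's proof: the same split into the case where $AB$ and $BA$ lie in one component (all signs vanish) versus two components, the same observation that the surviving adjacent pair has equal $\eta$-images and equal truncation words, and the same telescoping cancellation showing $\varphi\circ\eta$ of its expansion equals $1$. Your final induction on corresponding prefixes just makes explicit what the paper compresses into the assertion that $\rho^q({w'_i}^\varepsilon)$ is obtained from $\rho^q(w_i^\varepsilon)$ by deleting subwords of the form $\rho^r(AB^{-1})$ and $\rho^r(BA^{-1})$.
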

\begin{proof}
Let us $p=xAByBAz$ and $p'=xyz$, 
where $|A|$ is equal to $\tau_v (|B|)$.
We need to consider the following two cases.
\begin{description}
\item[{\rm Case 1}] The subwords $AB$ and $BA$ appear in a certain component. \\
(i.e. $p$ is a nanophrase of the form  $... |... AB ... BA ...| ...$ ) 
\item[{\rm Case 2}] The subwords $AB$ and $BA$ appear in different components. \\
(i.e. $p$ is a nanophrase of the form  $... |... AB ...|...|... BA ...|...$ ) 
\end{description}  

Case 1: 
The letters $A$ and $B$ appear twice in a certain component.
It follows from the definition of $\rho^q (w_i^\varepsilon )$ that the letters $A$ and $B$ are contained in $\rho^q (w_i^\varepsilon )$ and thus do not affect $\rho^q (w_i^\varepsilon )$.
Therefore $\rho^q (w_i^\varepsilon )$ is equal to $\rho^q ({w'_i}^\varepsilon )$, and so $\mu (p;c_1, \ldots ,c_{u},i)$ is equal to $\mu (p';c_1, \ldots ,c_{u},i)$. 

Case 2: 
Let $h$ be the order of the component in which the subword $AB$ occurs 
and $k$ the order of the component in which the subword $BA$ occurs. 
We show that $\varphi \circ \eta ( \rho^q (w_i^\varepsilon ))$ is equal to $\varphi \circ \eta ( \rho^q ({w'_i}^\varepsilon ))$.
Let us $|A|=b_+$ and $|B|=a_-$.
(We can show the other cases in the same way.)
Then the sign of $A$ in the $h$th component is 1 and the sign of $B$ in the $h$th component $-1$.
Moreover, the signs of $A$ and $B$ in the $k$th component are 0, and so they do not appear directly in $\rho^q ({w}_i^\varepsilon )$. 
Since $A$ and $B$ in the $h$th component are adjacent and their signs are $\pm1$,
$\rho^q({w'_i}^\varepsilon)$ is obtained from $\rho^q(w_i^\varepsilon)$ by deleting some subwords $\rho^r(AB^{-1})$ and $\rho^r(BA^{-1})$ for some $2 \leq r \leq q$. 
Since $A$ and $B$ in the $k$th component are adjacent and their signs are 0,
\begin{align*}
\rho^r(AB^{-1}) &= \rho^r(A) \rho^r(B^{-1}) \\
&= \rho^{r-1}(B^0 x^{-1})A\rho^{r-1}(x B^0)\rho^{r-1}(x^{-1})B^{-1}\rho^{r-1}(x) \\
&= \rho^{r-1}(x^{-1})A\rho^{r-1}(x)\rho^{r-1}(x^{-1})B^{-1}\rho^{r-1}(x), 
\end{align*}
where $x$ is a signed word obtained by truncating $w_k^{\varepsilon }$ at $B$.
Since $\eta (A)= a_k$ and $\eta (B^{-1})= a_k^{-1}$, we have 
\begin{align*}
\eta ( \rho^r(AB^{-1}) ) &= \eta (\rho^{r-1}(x^{-1})A\rho^{r-1}(x)\rho^{r-1}(x^{-1})B^{-1}\rho^{r-1}(x))  \\
&= \eta (\rho^{r-1}(x^{-1})) a_k \eta (\rho^{r-1}(x)) \eta (\rho^{r-1}(x^{-1})) a_k^{-1} \eta (\rho^{r-1}(x)). 
\end{align*}
Therefore 
\begin{align*}
\varphi \circ \eta ( \rho^{r}(AB^{-1})) = 1.
\end{align*}
Similarly $\varphi \circ \eta ( \rho^{r}(BA^{-1})) = 1$.
Therefore $\varphi \circ \eta ( \rho^q (w_i^\varepsilon))$ is equal to $\varphi \circ \eta ( \rho^q ({w'_i}^\varepsilon))$.
Thus $\mu (p;c_1, \ldots ,c_{u},i)$ is equal to $\mu (p';c_1, \ldots ,c_{u},i)$.
\end{proof}

\begin{remark}
The moves on nanophrases corresponding to second Reidemeister moves on links are 
$(\mathcal{A}, xAByBAz) \longleftrightarrow (\mathcal{A}-\{ A, B\} , xyz)$ and
$(\mathcal{A}, xAByABz) \longleftrightarrow (\mathcal{A}-\{ A, B\} , xyz)$.
But by Lemma 2.2 in \cite{MR2276346}, it is sufficient to show the first case. 
\end{remark}

\begin{proposition}\label{prop3}
If the indices $c_1, c_2, \ldots ,c_u,i$ are pairwise distinct integers between 1 and $n$, then $\mu (p;c_1, \ldots ,c_{u},i)=\mu (p';c_1, \ldots ,c_{u},i)$ if $p$ and $p'$ are related by an H3 move. 
\end{proposition}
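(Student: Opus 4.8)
The plan is to reduce the statement to a bijection between labeled forests by means of Lemma \ref{lem0}. Fix $q$ sufficiently large. By Lemma \ref{lem0} the coefficient of $\kappa_{c_1}\cdots\kappa_{c_u}$ in $\varphi\circ\eta(\rho^q(w_i^\varepsilon))$ equals $\#\mathcal{S}_e(F_i^q(p),c_1,\ldots,c_u)-\#\mathcal{S}_o(F_i^q(p),c_1,\ldots,c_u)$, and the analogous formula holds for $p'$. It is therefore enough to produce a bijection $\Phi\colon\mathcal{S}(F_i^q(p),c_1,\ldots,c_u)\to\mathcal{S}(F_i^q(p'),c_1,\ldots,c_u)$ that restricts to bijections $\mathcal{S}_e\to\mathcal{S}_e$ and $\mathcal{S}_o\to\mathcal{S}_o$, that is, preserves the parity of the number of vertices carrying $-1$ as the fourth label. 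The hypothesis that $c_1,\ldots,c_u,i$ are pairwise distinct enters exactly as in Lemma \ref{lem0}: it forces every admissible subforest to meet each index $c_j$ in a single vertex, which is what allows $\Phi$ to be defined by a local modification near the interchanged letters.

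First I would record how $F_i^q$ changes under $xAByACzBCt\longleftrightarrow xBAyCAzCBt$. Each of the three interchanges $AB\leftrightarrow BA$, $AC\leftrightarrow CA$, $BC\leftrightarrow CB$ is a transposition of two \emph{adjacent} signed letters, and it produces two kinds of effect on $F_i^q$: a transposition of the two contiguous blocks $\rho^{q}(\,\cdot\,)$ they span, which permutes the locations (first labels) of all the descendants involved; and, because the cut-off of a truncation $x_{ij}$ can fall between the two members of a transposed pair, a single gain or loss of one letter in exactly two of the expanding words. A direct check shows that for $AB\leftrightarrow BA$ the affected expanding words are those of the remaining occurrences of $A$ and of $B$, one acquiring and the other losing one letter. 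I would tabulate these gains and losses for all three pairs, reading off the signs from the rule defining $\varepsilon$ and from the constraint $(|A|,|B|,|C|)\in S_v$.

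With this description in hand I would build $\Phi$ by cases according to the distribution of the six occurrences of $A,B,C$ among the components. For subforests none of whose chosen vertices lie over the transposed region, $\Phi$ is induced by the relocation of vertices, and one checks that the strictly increasing location condition (condition (2) in the definition of $\mathcal{S}$) and the connectivity to a root are both preserved. For subforests whose chosen vertices do meet the region, I would pair configurations that differ by a gained/lost subtree or by the transposition of two blocks, using the uniform type and monotone sign pattern of the triples in $S_v$ so that the two members of a pair carry opposite contributions and cancel, while the surviving terms match across $p$ and $p'$.

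The step I expect to be the main obstacle is this last case: verifying that $\Phi$ respects the partition into $\mathcal{S}_e$ and $\mathcal{S}_o$ when the transposition actually reorders chosen vertices or when a chosen vertex sits on a gained or lost subtree. The interaction between the location-order condition $d_1<\cdots<d_u$ and the block transposition, together with the sign bookkeeping forced by $S_v$, is delicate; it is precisely here that the pairwise distinctness of $c_1,\ldots,c_u,i$ is indispensable, since it discards the diagonal configurations — those with a repeated index, which lie in the ideal $E$ — that would otherwise lack a parity-preserving partner.
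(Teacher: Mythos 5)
Your overall framework --- reduce to signed counts of subforests via Lemma \ref{lem0}, then analyze how the forests change under the transposition of adjacent letters, with a case split by how the pairs $AB$, $AC$, $BC$ are distributed among components --- is the same as the paper's. The easy cases (a letter occurring twice in one component, or a transposed pair whose two letters both carry sign $0$) go through exactly as you indicate. But the case you yourself flag as ``the main obstacle'' --- the three pairs lying in three distinct components $h,j,k$ with $i\neq j,k$ --- is where the content of the proposition lies, and your proposal does not contain the idea that closes it. Two problems. First, your stated goal of a parity-preserving bijection $\Phi\colon\mathcal{S}(F_i^q(p),c_1,\ldots,c_u)\to\mathcal{S}(F_i^q(p'),c_1,\ldots,c_u)$ is not attainable there: passing from $p'$ to $p$ deletes, under every vertex labeled $A$, the flanking $C$-subtrees coming from the changed truncation of $w_j^\varepsilon$ at $A$, so the two sets of admissible subforests genuinely have different cardinalities and only the differences $\#\mathcal{S}_e-\#\mathcal{S}_o$ can be expected to agree. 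You partly retreat to a signed cancellation argument, but you never say what cancels against what.

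Second, the missing mechanism is concrete and identifiable. Because $B$ and $C$ sit adjacently in component $k$ and both carry sign $0$ there, their truncations of $w_k^\varepsilon$ coincide; hence $\eta(\rho^r(B))=\eta(\rho^r(C))$ for every $r$, since also $\eta(B)=\eta(C)=a_k$. Combined with $\varphi(\eta(w))\varphi(\eta(w^{-1}))=1$, this gives
\[
\varphi\circ\eta\bigl(\rho^{q-s-1}(B)\,\rho^{q-s-1}(C^{-1})\rho^{q-s-1}(x^{-1})\,A\,\rho^{q-s-1}(x)\rho^{q-s-1}(C)\bigr)
=\varphi\circ\eta\bigl(\rho^{q-s-1}(x^{-1})\,A\,\rho^{q-s-1}(x)\,\rho^{q-s-1}(B)\bigr),
\]
i.e.\ the image of $\rho^{q-s-1}(B)\rho^{q-s}(A)$ on the $p'$ side equals that of $\rho^{q-s}(A)\rho^{q-s-1}(B)$ on the $p$ side. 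This single identity, applied at each $A$-labeled vertex without $A$-labeled ancestor after decomposing $\mathcal{S}_\ast$ by presence or absence of an $A$-vertex, is what makes the gained/lost $C$-subtrees and the block transposition harmless simultaneously. An appeal to ``the uniform type and monotone sign pattern of the triples in $S_v$'' does not substitute for it. (Note also that the distinctness hypothesis is used somewhat differently than you describe: when $i$ equals a component containing a doubled letter it forces that index out of $\{c_1,\ldots,c_u\}$ so the corresponding vertices can be pruned, and in the two-component subcase it forces an admissible subforest to contain an $A$-vertex or a $B$-vertex but not both, which is what makes the decomposition into the $A$, $B$, and neither pieces a disjoint union.)
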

\begin{proof}
Let us $p=xAByACzBCt$ and $p'=xBAyCAzCBt$, where $(|A|,|B|,|C|)$ is an element of $S$.
Moreover let us $|A|=|B|=|C|=b_+$.
(We can show the other cases in the same way.)
We need to consider the following four cases. 
\begin{description}
\item[{\rm Case 1}] The subwords $AB$, $AC$ and $BC$ appear in a certain component. \\
(i.e. $p$ is a nanophrase of the form  $... |... AB ... AC ... BC ...| ...$ ) 
\item[{\rm Case 2}] The subwords $AB$ and $AC$ appear in a certain component, but the subword $BC$ does not appear in that component. \\
(i.e. $p$ is a nanophrase of the form  $... |... AB ... AC ...|...|... BC ...|...$ ) 
\item[{\rm Case 3}] The subwords $AC$ and $BC$ appear in a certain component, but the subword $AB$ does not appear in that component. \\
(i.e. $p$ is a nanophrase of the form  $...|...AB ...|...|... AC ... BC ...| ...$ ) 
\item[{\rm Case 4}] The subwords $AB$, $AC$ and $BC$ appear in three different components. \\
(i.e. $p$ is a nanophrase of the form  $...|... AB ...|...|... AC ...|...|...BC ...| ...$) 
\end{description}  

Case 1: 
The letters $A$, $B$ and $C$ appear twice in a component and so they appear in neither ${w}_i^\varepsilon$ nor ${w'_i}^\varepsilon$.  
Hence they affect neither $\rho^q ({w}_i^\varepsilon )$ nor $\rho^q ({w'_i}^\varepsilon )$.
Thus $\rho^q ({w}_i^\varepsilon )$ is equal to $\rho^q ({w'_i}^\varepsilon )$. 
 
Case 2:
The letter $A$ appears twice in a component and hence it affects neither $\rho^q ({w}_i^\varepsilon )$ nor $\rho^q ({w'_i}^\varepsilon )$.
Since the letters $B$ and $C$ are adjacent in the component that contains the subword $BC$ (resp. $CB$) and the sign associated with these letters is 0, 
the difference between $BC$ and $CB$ does not affect $\rho^q ({w}_i^\varepsilon )$ and $\rho^q ({w'_i}^\varepsilon )$.
Thus $\rho^q ({w}_i^\varepsilon )$ is equal to $\rho^q ({w'_i}^\varepsilon )$. 
 
Case 3:
Let $h$ be the order of the component in which the subword $AB$ occurs 
and $k$ the order of the component in which the subwords $AC$ and $BC$ occur. 
By the same reason as in case 1, the letter $C$ affects neither $\rho ^q(w_i^\varepsilon)$ nor $\rho ^q({w'_i}^\varepsilon)$.
We divide the proof into the following two cases. 
\begin{description}
\item[{\rm Case 3-1}] $i=k$ 
\item[{\rm Case 3-2}] $i\neq k$ 
\end{description}

Case 3-1: 
Using Lemma $\ref{lem0}$, we only need to show that 
\begin{align*}
\#  \mathcal{S}_{\text{e}}({F}_k^q,c_1, \ldots,c_{u})- \# \mathcal{S}_{\text{o}}({F}_k^q,c_1, \ldots,c_{u}) \\
= \#  \mathcal{S}_{\text{e}}({F'_k}^q,c_1, \ldots,c_{u})- \# \mathcal{S}_{\text{o}}({F'_k}^q,c_1, \ldots,c_{u}), 
\end{align*}
where $F_k^q$ and ${F'_k}^q$ denote $F_k^q(p)$ and $F_k^q(p')$, respectively. 
Consider the subgraph of $F_k^q$ obtained by deleting the vertices labeled $k$ and their descendants, and denote it by ${F}_k^q \setminus \cup T_k$.
Since $i=k$, $k \notin \{c_1,...,c_u \}$ by assumption.
Thus we actually have the fact that the cardinality of $\mathcal{S}_\ast({F}_k^q,c_1, \ldots,c_{u})$ is equal to the cardinality of $\mathcal{S}_{\ast}({F}_k^q \setminus \cup T_k,c_1, \ldots,c_{u})$ for both $\ast =$ e and $\ast =$ o.
We define the subgraph ${F'_k}^q \setminus \cup T_k$ as above.
If we ignore the first labels, then the subgraphs ${F}_k^q \setminus \cup T_k$ and ${F'_k}^q \setminus \cup T_k$ are isomorphic as labeled forests.
Therefore we have the identity $\mu(p;c_1, \ldots ,c_{u},k) = \mu(p';c_1, \ldots ,c_{u},k)$.

Case 3-2:
If $k\notin \{c_1, \ldots,c_{u} \}$, it follows from the same reason as in case 3-1 that  $\mu(p;c_1, \ldots ,c_{u},i) = \mu(p';c_1, \ldots ,c_{u},i)$.
Let $k\in \{c_1, \ldots,c_{u} \}$.
Since the indices $c_1, \ldots,c_{u}$ are pairwise distinct and $\eta(A)=\eta(B)=a_k$, $\mathcal{S}_{\ast}({F}_i^q, c_1, \ldots,c_{u})$ is equal to
\begin{align*}
 \mathcal{S}_{\ast}({F}_i^q, c_1, \ldots,c_{u}, A) \sqcup \mathcal{S}_{\ast}({F}_i^q, c_1, \ldots,c_{u}, B) \sqcup \mathcal{S}_{\ast}({F}_i^q, c_1, \ldots,c_{u}, \bar{A}, \bar{B}), \notag 
\end{align*}
where $\mathcal{S}_{\ast}({F}_i^q, c_1, \ldots,c_{u}, A)$ (resp. $\mathcal{S}_{\ast}({F}_i^q, c_1, \ldots,c_{u}, B)$) denotes the set of elements of $\mathcal{S}_{\ast}({F}_i^q, c_1, \ldots,c_{u})$ with vertices labeled $A$ (resp. $B$) 
and  $\mathcal{S}_{\ast}({F}_i^q, c_1, \ldots,c_{u}, \bar{A}, \bar{B})$ denotes the set of  elements of $\mathcal{S}_{\ast}({F}_i^q, c_1, \ldots,c_{u})$ without vertices labeled $A$ and $B$.
If an element of $\mathcal{S}({F}_i^q, c_1, \ldots,c_{u})$ has a vertex labeled $A$, then the element does not have a vertex labeled $B$.
Therefore for both $\ast = $ e and o,
\begin{align*}
\mathcal{S}_{\ast}({F}_i^q, c_1, \ldots,c_{u}, A) 
= \mathcal{S}_{\ast}({F}_i^q \setminus \cup T_B,c_1, \ldots,c_{u}, A),
\end{align*}
where ${F}_i^q \setminus \cup T_B$ denotes the subgraph of $F_i^q$ obtained by deleting the vertices labeled $B$ and their descendants.
Similarly, 
\begin{align*}
\mathcal{S}_{\ast}({F}_i^q, c_1, \ldots,c_{u}, B) &= \mathcal{S}_{\ast}({F}_i^q \setminus \cup T_A,c_1, \ldots,c_{u}, B), \\ 
\mathcal{S}_{\ast}({F}_i^q, c_1, \ldots,c_{u}, \bar{A}, \bar{B}) &= \mathcal{S}_{\ast}({F}_i^q \setminus \cup T_A \cup T_B,c_1, \ldots,c_{u}, \bar{A}, \bar{B}). 
\end{align*}
If we ignore the first labels, then ${F}_i^q \setminus \cup T_A$ and ${F'_i}^q \setminus \cup T_A$ are isomorphic as labeled forests.
If we consider $\cup T_B$ or $\cup T_A \cup T_B$ instead of $\cup T_A$, then we obtain the same results. 
Thus $\mu(p;c_1, \ldots ,c_{u},i) $ is equal to $\mu(p';c_1, \ldots ,c_{u},i) $.

Case 4:
Let $h$ be the order of the component in which the subword $AB$ occurs, 
$j$ the order of the component in which the subword $AC$ occurs,
and $k$ the order of the component in which the subword $BC$ occurs. 
We divide the proof into the following three cases. 
\begin{description}
\item[{\rm Case 4-1}] $i=j$ 
\item[{\rm Case 4-2}] $i=k$ 
\item[{\rm Case 4-3}] $i\neq j,k$ 
\end{description}

Case 4-1:
Consider the subgraphs of $F_j^q$ and ${F'_j}^q$ obtained by deleting the vertices labeled $A$ and their descendants.
If we ignore the first labels, then they are isomorphic as labeled forests.
Since $j \notin \{c_1, \ldots,c_{u} \}$ and $\eta(A)=a_j$, it follows from case 3-1 that we have $\mu(p;c_1, \ldots ,c_{u},j)$ is equal to $\mu(p';c_1, \ldots ,c_{u},j)$. 

Case 4-2:
Consider the subgraphs of $F_i^q$ and ${F'_i}^q$ obtained by deleting the vertices labeled $B$ and $C$ and their descendants.
If we ignore the first label, then they are isomorphic as labeled forests.
Since $k\notin \{c_1, \ldots,c_{u} \}$ and $\eta(B)=\eta(C)=a_k$, we have the identity $\mu(p;c_1, \ldots ,c_{u},k) = \mu(p';c_1, \ldots ,c_{u},k)$. 

Case 4-3:
The associated forest $F_i^q$ is obtained from ${F'_i}^q$ by deleting those vertices labeled $C$ and their descendants which have a vertex labeled $A$ as parent,
and then by exchanging the vertices labeled $A$ and their descendants for the corresponding the vertices labeled $B$ and their descendants.
For both $\ast =$ e and $\ast =$ o, then we have
\begin{align*}
\mathcal{S}_{\ast}({F}_i^q, c_1, \ldots,c_{u}) = \mathcal{S}_{\ast}({F}_i^q, c_1, \ldots,c_{u}, A) \sqcup \mathcal{S}_{\ast}({F}_i^q, c_1, \ldots,c_{u}, \bar{A}). 
\end{align*}
Consider the subgraphs of $F_i^q$ and ${F'_i}^q$ obtained by deleting the vertices labeled $A$ and their descendants.
If we ignore the first label, then they are isomorphic as labeled forests.
Therefore we have 
\begin{align*}
\mathcal{S}_{\ast}({F}_i^q, c_1, \ldots,c_{u}, \bar{A}) = \mathcal{S}_{\ast}({F'_i}^q,c_1, \ldots,c_{u}, \bar{A}). 
\end{align*}
Therefore we only need to show that 
\begin{align}
\# \mathcal{S}_{\text{e}}({F}_i^q, c_1, \ldots,c_{u}, A) - \# \mathcal{S}_{\text{o}}({F}_i^q, c_1, \ldots,c_{u}, A) \notag \\
= \# \mathcal{S}_{\text{e}}({F'_i}^q,c_1, \ldots,c_{u}, A) - \# \mathcal{S}_{\text{e}}({F'_i}^q,c_1, \ldots,c_{u}, A). \notag 
\end{align}

Let $V_A$ denote the set of those vertices of the forest ${F}_i^q$ that are labeled $A$ and which do not have an ancestor labeled $A$.
Fix an element $v$ in $V_A$.
Then let $F_v$ denote the subgraph of $F_i^q$ obtained by deleting those vertices and their descendants which either are labeled $A$ or have depth $q-2$, with the exception of $v$. 
Then we have 
\begin{align}
\mathcal{S}_{\ast}({F}_i^q, c_1, \ldots,c_{u}, A) &= \bigsqcup_{v\in V_A} \mathcal{S}_{\ast}({F}_i^q, c_1, \ldots,c_{u}, v) \notag \\
&= \bigsqcup_{v\in V_A} \mathcal{S}_{\ast}({F_v},c_1, \ldots,c_{u}). \notag
\end{align}
Similarly let $V'_A$ denote the set of those vertices of the forest ${F'_i}^q$ that are labeled $A$ and which do not have an ancestor labeled $A$.
For each $v$ in $V_A$, we denote by $v'$ its image under a natural bijection from $V_A$ to $V'_A$.
Then let $F'_{v'}$ denote the subgraph of ${F'_i}^q$ as above.

It is sufficient to show that for any $v$ in $V_A$ 
\begin{align}
\# \mathcal{S}_{\text{e}}({F_v},c_1, \ldots,c_{u}) - \# \mathcal{S}_{\text{o}}({F_v},c_1, \ldots,c_{u}) = \# \mathcal{S}_{\text{e}}({F'_{v'}},c_1, \ldots,c_{u}) - \# \mathcal{S}_{\text{o}}({F'_{v'}},c_1, \ldots,c_{u}). \notag 
\end{align}
If the depth of $v$ is $q-2$, then those vertices of ${F_v}$ and ${F'_{v'}}$ which have depth less than $q-2$ do not contain a vertex labeled $A$, and those vertices of ${F_v}$ and ${F'_{v'}}$ which have depth $q-2$ do not contain a vertex labeled $B$.
Therefore we obtain that the subwords of $\rho^q(w_i^\varepsilon)$ and $\rho^q({w_i'}^\varepsilon)$ corresponding to $F_v$ and $F'_v$, respectively, are equal.
If  the depth of $v$ is $s<q-2$, then $v$, the vertex labeled $B$ paired with $v$ and their descendants in $F_v$ correspond to the subword $\rho^{q-s}(A)\rho^{q-s-1}(B)$ or $\rho^{q-s-1}(B^{-1})\rho^{q-s}(A^{-1})$.
Then 
\begin{align}
\rho^{q-s}(A)\rho^{q-s-1}(B) = \rho^{q-s-1}(x^{-1}) A \rho^{q-s-1}(x) \rho^{q-s-1}(B), \label{h3-1}
\end{align}
where $x$ is a signed word obtained by truncating $w_j^{\varepsilon }$ at $C$.
Moreover $v'$, the vertex labeled $B$ paired with $v'$ and their descendants in $F'_{v'}$ correspond to the subword $\rho^{q-s-1}(B)\rho^{q-s}(A)$ or $\rho^{q-s}(A^{-1})\rho^{q-s-1}(B^{-1})$. 
Then we have 
\begin{align}
\rho^{q-s-1}(B)\rho^{q-s}(A) = \rho^{q-s-1}(B)\rho^{q-s-1}(C^{-1})\rho^{q-s-1}(x^{-1}) A \rho^{q-s-1}(x)\rho^{q-s-1}(C). \label{h3-2}
\end{align}
Since $\eta ( \rho^r(B))=\eta ( \rho^r(C))$ for any $r$, the image of ($\ref{h3-1}$) and ($\ref{h3-2}$) under $\varphi \circ \eta$ are equal.
Similarly, the image of $\rho^{q-s-1}(B^{-1})\rho^{q-s}(A^{-1})$ and $\rho^{q-s}(A^{-1})\rho^{q-s-1}(B^{-1})$ under $\varphi \circ \eta$ are equal.
Therefore the image of the subword of $\rho^q(w_i^\varepsilon)$ corresponding to $F_v$ under $\varphi \circ \eta$
is equal to the image of the subword of $\rho^q({w'_i}^\varepsilon)$ corresponding to $F'_{v'}$ under $\varphi \circ \eta$.
\end{proof}

\begin{proposition}\label{prop5}
If the indices $c_1, c_2, \ldots ,c_u,i$ are a sequence of integers between 1 and $n$ (the indices are possibly repeating), then $\mu (p;c_1, \ldots ,c_{u},i)=\mu (p';c_1, \ldots ,c_{u},i)$ if $p$ and $p'$ are related by a self crossing move. 
\end{proposition}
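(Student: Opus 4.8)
The plan is to prove the strongest possible statement, namely that the words $\rho^q(w_i^\varepsilon)$ themselves are \emph{identical} for $p$ and $p'$ for every $i$ and every $q$. This immediately gives equality of the full power series $\varphi\circ\eta(\rho^q(w_i^\varepsilon))$, hence of every coefficient $\mu$, so the forest machinery of Lemma \ref{lem0} is never needed and the conclusion holds for an arbitrary sequence $c_1,\ldots,c_u,i$, repetitions included. This parallels the mechanism of Propositions \ref{prop1} and \ref{prop2} (Case 1), where $\rho^q(w_i^\varepsilon)$ is literally unchanged.

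First I would unwind the move. A self crossing move on the $k$th component affects a single letter $A$ whose two occurrences both lie in the $k$th component, replacing $|A|_p$ by $\sigma_v(|A|_p)$ and changing nothing else: the decorations and the component-positions of all other letters are untouched. The key observation is then the following. Because both occurrences of $A$ lie in the \emph{same} component, neither $i<k$ nor $i>k$ holds in the rule defining the signed component, so the sign $\varepsilon$ attached to each occurrence of $A$ equals $0$. This is dictated purely by the relative positions of the two occurrences and is therefore insensitive to the value of $|A|$; in particular it is unchanged by applying $\sigma_v$.

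The rest is a short propagation argument. With sign $0$, each occurrence of $A$ contributes $A^0=\emptyset$ to the signed component $w_k^\varepsilon$, and likewise contributes $\emptyset$ to every truncation $x_{ij}$, since truncations are formed from the signed word $w_k^\varepsilon$. Hence $A$ appears neither in any $w_i^\varepsilon$ nor in any $x_{ij}$ used to build $\rho$; by an immediate induction on $q$ using $\rho^q(A_{ij}^{\varepsilon_{ij}})=\rho^{q-1}(x_{ij}^{-1})A_{ij}^{\varepsilon_{ij}}\rho^{q-1}(x_{ij})$, the letter $A$ never occurs in any $\rho^q(w_i^\varepsilon)$. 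Since the only way the decoration $|A|$ can enter the construction is through its sign $\varepsilon$ or through $\eta(A)$, and $A$ is invisible to the whole expansion, changing $|A|$ via $\sigma_v$ leaves every $\rho^q(w_i^\varepsilon)$ literally unchanged. Therefore $\mu(p;c_1,\ldots,c_u,i)=\mu(p';c_1,\ldots,c_u,i)$ for every sequence.

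I expect no real obstacle here: the one point requiring care is the verification that a sign-$0$ letter contributes nothing even indirectly, but this is immediate from the convention $A^0=\emptyset$ together with the fact that truncations are taken of signed words. In this sense the self crossing move is genuinely invisible to $\mu$, which is precisely the reason $\bar\mu$ descends to an $M$-homotopy invariant.
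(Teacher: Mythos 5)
Your proposal is correct and follows essentially the same route as the paper: since both occurrences of $A$ lie in one component, each gets sign $0$, so $A$ is absent from every $w_i^\varepsilon$ and every truncation $x_{ij}$, hence from every $\rho^q(w_i^\varepsilon)$, and changing $|A|$ by $\sigma_v$ alters nothing. The paper states this in a single sentence; you have merely supplied the (correct) justification in more detail.
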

\begin{proof}
Let us $p=...|...A...A...|...$ and $p'$ also have the same form.
Furthermore, if we write $|A|_p$ for $|A|$ in $p$ and $|A|_{p'}$ for $|A|$ in $p'$, then $|A|_{p'}$ equals $\sigma_v (|A|_p)$.
Since the letter $A$ appears twice in a component, $\rho ^q(w_i^\varepsilon)$ is equal to $\rho ^q({w'_i}^\varepsilon)$. 
\end{proof}

\section{Proof of invariance for shift move}
We will show that $\bar{\mu} (p;c_1,c_2, \ldots ,c_{u},i)$ is an invariant under a shift move by induction on $u$.
We suppose that for each $u<q-1$, if the indices $c_1, c_2, \ldots ,c_u,i$ are pairwise distinct,
then $\bar\mu (p; c_1, c_2, \ldots ,c_u,i)$ is an invariant under $M$-homotopy.  
Moreover we suppose that if $u<q-1$ and the nanophrases $p$ and $p'$ are $M$-homotopic,
then $\bar\mu(p; c_1, c_2, \ldots ,c_u,i)$ is congruent to $\bar\mu(p'; c_1, c_2, \ldots ,c_u,i)$.
Therefore it follows from the definition of $\Delta $ that $\Delta(p; c_1, c_2, \ldots ,c_u,i)$ is congruent to $\Delta(p'; c_1, c_2, \ldots ,c_u,i)$ for $u<q$.
We prepare the following two lemmas for the next proposition.
They are due to Milnor \cite{MR0092150}.

\begin{lemma}[Milnor \cite{MR0092150}; (16), (19), (14)]\label{lem2}
Let $M$ be the set $\{ a_1, \cdots ,a_n \}$.
For any element $a_j$ in $M$, 
the coefficient of $\kappa _{c_1} \cdots \kappa _{c_{u}}$ in $\varphi (a_j \eta (\rho^s(w_j^\varepsilon)) a_j^{-1} \eta(\rho^s(w_j^\varepsilon))^{-1})$ is congruent to 0 modulo $\Delta (p;c_1, c_2, \ldots ,c_u,i)$.
\end{lemma}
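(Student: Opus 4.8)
The plan is to treat the whole expression algebraically inside the power series ring. Since $\varphi$ is multiplicative and sends $a_h$ and $a_h^{-1}$ to the mutually inverse units $1+\kappa_h$ and $(1+\kappa_h)^{-1}$, I would set $A = \varphi(a_j) = 1 + \kappa_j$ and $P = \varphi \circ \eta(\rho^s(w_j^\varepsilon))$, so that the quantity in the lemma is the group commutator $A P A^{-1} P^{-1}$ with $A^{-1} = \varphi(a_j^{-1})$. By the very definition of the $\mu$ invariants, the coefficient of $\kappa_{e_1} \cdots \kappa_{e_v}$ in $P$ is $\mu(p; e_1, \ldots, e_v, j)$ for $v < s$. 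I would then record the identity
\[ A P A^{-1} P^{-1} = 1 + (\kappa_j P - P \kappa_j) A^{-1} P^{-1}, \]
which follows from $\kappa_j P - P\kappa_j = AP - PA$. Its correction term has lowest degree $2$, so the coefficient of $\kappa_{c_1}$ vanishes; this matches the convention $\Delta(p; c_1, i) = 0$ and settles the case $u = 1$.

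For $u \geq 2$ I would extract the coefficient of $\kappa_{c_1} \cdots \kappa_{c_u}$ from $(\kappa_j P - P \kappa_j) A^{-1} P^{-1}$. Because $A$ and $A^{-1}$ contribute only powers of $\kappa_j$, the single explicit factor $\kappa_j$ sits in a definite position, and the remaining $\kappa$-factors come from $P$ and from $A^{-1} P^{-1}$, whose coefficients are themselves integral combinations of the $\mu(p; \cdot, j)$. Hence the coefficient is an integer combination of products of $\mu(p; \cdot)$'s, and a degree count shows each such factor is indexed by a sequence of length at most $u$, strictly shorter than the length $u+1$ of $(c_1, \ldots, c_u, i)$. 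This is the content of Milnor's equations (14) and (16) in the present notation. If $j \notin \{c_1, \ldots, c_u\}$ the coefficient vanishes identically, since any nonconstant contribution to the commutator retains an uncancelled $\kappa_j$ while $\kappa_{c_1} \cdots \kappa_{c_u}$ contains none; so one may assume $j = c_m$ for some $m$.

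The divisibility then follows from the definition of $\Delta$. The non-$j$ indices inside any $\mu$-factor occur in the relative order they have in $\kappa_{c_1} \cdots \kappa_{c_u}$, while $j = c_m$ is appended at the end of that factor; consequently each factor's index sequence is a cyclic rotation of an in-order proper subsequence of $(c_1, \ldots, c_u, i)$ of length between $2$ and $u$, that is, precisely one of the sequences over which $\Delta(p; c_1, \ldots, c_u, i)$ is formed as a greatest common divisor. Thus each such $\mu$-factor is divisible by $\Delta(p; c_1, \ldots, c_u, i)$, hence so is every product term, and therefore so is the coefficient of $\kappa_{c_1} \cdots \kappa_{c_u}$; this is Milnor's equation (19). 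The cases $u = 2, 3$ confirm the pattern: for $u=2$ and $c_1 = j$ one gets exactly $\mu(c_2, c_1)$, and for $u = 3$ and $c_1 = j$ one gets $\mu(c_2, c_3, c_1) - \mu(c_2, c_1)\mu(c_3, c_1)$, every factor of which is a generator of $\Delta$.

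The hard part will be the bookkeeping of the previous paragraph: verifying that expanding the commutator produces only $\mu$-factors whose index sequences are cyclic rotations of proper subsequences of $(c_1, \ldots, c_u, i)$, and that every product term contains at least one such factor. This positional analysis — tracking where the single $\kappa_j$ lies and how the appended index $c_m$ rotates the surviving subsequence — is exactly Milnor's original computation, and carrying it out degree by degree is the most delicate step of the argument.
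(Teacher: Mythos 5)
Your argument is correct, and it actually supplies a proof where the paper gives none: Lemma \ref{lem2} is stated with only a citation to Milnor, whose equations (14), (16), (19) are proved via the Magnus expansion of the link group, whereas here the statement is needed for the formal series $\varphi\circ\eta(\rho^s(w_j^\varepsilon))$ defined combinatorially from the nanophrase, so a self-contained verification like yours is genuinely worth having. Your commutator identity $APA^{-1}P^{-1}=1+(\kappa_jP-P\kappa_j)A^{-1}P^{-1}$ is the right starting point, and the bookkeeping you defer at the end is in fact routine once one further observation is added: since the $c_l$ are pairwise distinct (the hypothesis under which the lemma is invoked throughout the paper --- and it is needed here, since otherwise $\kappa_j$ could occur twice in the target monomial) and every nonconstant monomial of the correction term already carries the explicit $\kappa_j$, the factor $A^{-1}$ can only contribute its constant term $1$; hence for $u\geq 2$ and $j=c_m$ the coefficient in question equals minus the coefficient of $\kappa_{c_1}\cdots\kappa_{c_u}$ in $P\kappa_jP^{-1}$, which factors as the coefficient of $\kappa_{c_1}\cdots\kappa_{c_{m-1}}$ in $P$ times the coefficient of $\kappa_{c_{m+1}}\cdots\kappa_{c_u}$ in $P^{-1}$. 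The first factor is $\mu(p;c_1,\ldots,c_{m-1},c_m)$, and the second expands into signed products of $\mu(p;c_a,\ldots,c_b,c_m)$ over consecutive blocks with $m<a\leq b\leq u$; every index sequence occurring is a cyclic rotation of an order-preserving subsequence of $(c_1,\ldots,c_u,i)$ of length between $2$ and $u$ that omits at least $i$, so each term contains a generator of the gcd defining $\Delta(p;c_1,\ldots,c_u,i)$, exactly as your $u=2,3$ computations predict. The only other point to record is that identifying coefficients of $P$ with the stabilized $\mu$'s requires the relevant degree to be at most $s$, which holds where the lemma is applied ($t\leq s$ in Case 2-2 of the proof of Proposition \ref{prop4}).
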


\begin{lemma}[Milnor \cite{MR0092150}; (16), (17), (12)]\label{lem1}
For any word $x$ on $M \cup M^{-1}$,
the coefficient of $\kappa _{c_1} \kappa _{c_2}  \cdots \kappa _{c_u}$ in the image of $\eta(\rho ^q(w_i^\varepsilon))$ under $\varphi $ is equal to that of $x\eta(\rho ^q(w_i^\varepsilon))x^{-1}$ under $\varphi$ modulo $\Delta(p; c_1, c_2, \ldots ,c_u, i)$. 
\end{lemma}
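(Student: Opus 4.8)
The plan is to reduce the statement to a purely algebraic fact about the expansion $\varphi$, exactly as in Milnor \cite{MR0092150}. Write $w = \eta(\rho^q(w_i^\varepsilon))$ and set $W = \varphi(w)$, $X = \varphi(x)$ in $\mathbb{Z}[[\kappa_1,\ldots,\kappa_n]]$. Since $\varphi$ is multiplicative and $\varphi(a_h)\varphi(a_h^{-1}) = (1+\kappa_h)(1-\kappa_h+\kappa_h^2-\cdots) = 1$, we have $\varphi(x^{-1}) = X^{-1}$ and hence $\varphi(x w x^{-1}) = X W X^{-1}$. So it suffices to show that the coefficient of $\kappa_{c_1}\cdots\kappa_{c_u}$ in $XWX^{-1}-W$ is divisible by $\Delta(p; c_1,\ldots,c_u,i)$. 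The overall idea is that conjugation perturbs the degree-$u$ coefficient only through products of strictly lower-degree coefficients of $W$, and that each such lower coefficient is one of the shorter $\mu$'s over which $\Delta$ is a greatest common divisor.

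First I would decompose each factor into homogeneous parts, $X=\sum_{a\ge 0}X_a$, $W=\sum_{b\ge 0}W_b$, $X^{-1}=\sum_{c\ge 0}Y_c$, with $X_0=W_0=Y_0=1$ and the subscript recording the $\kappa$-degree. The degree-$u$ part of $XWX^{-1}-W$ is
\[
 \sum_{\substack{a+b+c=u\\ (a,c)\neq(0,0)}} X_a W_b Y_c,
\]
the summand $(a,b,c)=(0,u,0)$ having been cancelled by $-W$. I would split this according to whether $b=0$ or $b\ge 1$. The terms with $b=0$ sum to $\sum_{a+c=u}X_a Y_c = (XX^{-1})_u = 0$, since $XX^{-1}=1$ and $u\ge 1$; this is the first key cancellation. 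In the remaining terms $b\ge 1$, and automatically $b\le u-1$ because $a+c\ge 1$.

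The essential use of non-commutativity comes next. The ordered monomial $\kappa_{c_1}\cdots\kappa_{c_u}$ has a unique factorization into an initial block of length $a$, a middle block of length $b$, and a final block of length $c$, so the coefficient of $\kappa_{c_1}\cdots\kappa_{c_u}$ in $X_a W_b Y_c$ is the product of the coefficient of $\kappa_{c_1}\cdots\kappa_{c_a}$ in $X$, the coefficient of $\kappa_{c_{a+1}}\cdots\kappa_{c_{a+b}}$ in $W$, and the coefficient of $\kappa_{c_{a+b+1}}\cdots\kappa_{c_u}$ in $X^{-1}$. Because $b\le u\le q$, the middle coefficient is already stable and equals $\mu(p; c_{a+1},\ldots,c_{a+b},i)$. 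The bookkeeping that links it to $\Delta$ is then immediate: the sequence $(c_{a+1},\ldots,c_{a+b},i)$ is obtained from $(c_1,\ldots,c_u,i)$ by deleting $c_1,\ldots,c_a,c_{a+b+1},\ldots,c_u$ while keeping $i$; as $(a,c)\neq(0,0)$ at least one index is deleted, so it is a proper subsequence of length $b+1$ with $2\le b+1\le u$. Hence $\mu(p; c_{a+1},\ldots,c_{a+b},i)$ is one of the invariants appearing in the greatest common divisor defining $\Delta(p; c_1,\ldots,c_u,i)$, so it is $\equiv 0 \pmod{\Delta(p; c_1,\ldots,c_u,i)}$. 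Summing, the coefficient of $\kappa_{c_1}\cdots\kappa_{c_u}$ in $XWX^{-1}-W$ is an integer combination of multiples of $\Delta$, which is what we want.

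I would close by noting where the care lies. The argument never requires the $c_j$ to be distinct, since the factorization of the monomial is unique in the free monoid regardless of repetitions; the substantive inputs are only the cancellation $XX^{-1}=1$ that removes the $b=0$ terms and the verification that every surviving middle factor is genuinely a shorter $\mu$ counted by $\Delta$ (in particular that $i$ is always retained). The main obstacle, such as it is, is justifying the stability identity ``middle coefficient $=\mu(p; c_{a+1},\ldots,c_{a+b},i)$'' for finite $q$, which rests on $b<u\le q$; in the application this is supplied by the inductive hypothesis $u<q-1$ used in the proof of invariance under the shift move.
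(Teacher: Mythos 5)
Your argument is correct and is precisely the one the paper relies on: the paper gives no proof of this lemma, deferring to formulas (16), (17), (12) of Milnor's paper, and your reconstruction (multiplicativity of $\varphi$, cancellation of the $b=0$ terms via $XX^{-1}=1$, unique block factorization of the ordered monomial, and identification of each middle factor with a shorter $\mu(p;c_{a+1},\ldots,c_{a+b},i)$ divisible by $\Delta$) is exactly Milnor's argument. You also correctly flag the only delicate point, namely that the middle coefficients are stable because $b\le u-1$, which is guaranteed in the paper's application where $u=q-1$.
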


\begin{proposition}\label{prop4}
If the indices $c_1, c_2, \ldots ,c_u,i$ are pairwise distinct integers between 1 and $n$, then $\bar{\mu} (p;c_1, \ldots ,c_{u},i)= \bar{\mu} (p';c_1, \ldots ,c_{u},i)$ if $p$ and $p'$ are related by a shift move. 
\end{proposition}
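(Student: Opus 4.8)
The plan is to mimic Milnor's original argument for invariance of $\bar{\mu}$ under a change of basepoint, now phrased entirely in terms of the signed words $w_i^\varepsilon$ and the expanding operator $\rho^q$. By the inductive hypothesis recorded above, for every sequence shorter than $c_1,\ldots,c_u,i$ the quantity $\Delta$ is unchanged by an $M$-homotopy; in particular $\Delta(p;c_1,\ldots,c_u,i)\equiv\Delta(p';c_1,\ldots,c_u,i)$. Hence it suffices to prove that $\mu(p;c_1,\ldots,c_u,i)$ is congruent to $\mu(p';c_1,\ldots,c_u,i)$ modulo $\Delta(p;c_1,\ldots,c_u,i)$. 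Write $\ell$ for the component on which the shift acts and $B=A_{\ell 1}$ for the letter moved from the front to the back of $w_\ell$; let $\delta$ be the sign of $B$ in $w_\ell^\varepsilon$.

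First I would dispose of the cases in which $\rho^q(w_i^\varepsilon)$ is literally unchanged, so that $\mu$ is preserved on the nose (as in Propositions \ref{prop1}--\ref{prop3} and \ref{prop5}). If $B$ occurs twice in $w_\ell$, then both of its occurrences carry sign $0$; since $\nu_v$ only interchanges the letters of type $a$ and $b$ while preserving the subscript, the projection change prescribed by the shift leaves every sign equal to $0$, and as $B^0=\emptyset$ the letter $B$ contributes to no signed truncation, so $w_\ell^\varepsilon$ is unaltered. Likewise, if the two occurrences of $B$ lie in distinct components but the occurrence in $w_\ell$ has sign $0$, then moving $B^0=\emptyset$ inside $w_\ell$ changes no signed truncation. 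In both situations every $\rho^q(w_j^\varepsilon)$, and in particular $\rho^q(w_i^\varepsilon)$, is unchanged, so $\mu$ is preserved. This leaves only the case $\delta=\pm1$ with the two occurrences of $B$ in distinct components.

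The key observation in the remaining case is that every truncation $x$ read off from the $\ell$th component begins with the signed letter $B^\delta$: such a truncation consists of the letters of $w_\ell^\varepsilon$ strictly preceding some occurrence that is not $B$ itself, hence it starts at position $1$, namely at $B^\delta$. After the shift this truncation becomes $x'=B^{-\delta}x$, because $B$ has been carried to the end. Using $\rho^r(w^{-1})=\rho^r(w)^{-1}$ together with the recursion $\rho^{r+1}(A^\varepsilon)=\rho^{r}(x^{-1})A^\varepsilon\rho^{r}(x)$, I would show that the sole effect of the shift is to replace, at every depth $r$ at which a letter $A$ whose partner lies in the $\ell$th component occurs, the core $A^\varepsilon$ by its conjugate $\rho^{r}(B^\delta)\,A^\varepsilon\,\rho^{r}(B^\delta)^{-1}$; when moreover $\ell=i$ one has in addition the top-level cyclic rotation $w_i^\varepsilon=B^\delta y\mapsto y B^\delta$, which by multiplicativity of $\rho^q$ over concatenation conjugates $\eta(\rho^q(w_i^\varepsilon))$ by $\eta(\rho^q(B^\delta))$.

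Finally I would assemble these nested conjugations. The aim is to prove that, after applying $\varphi\circ\eta$, the passage from $p$ to $p'$ changes $\varphi\circ\eta(\rho^q(w_i^\varepsilon))$ only by an overall conjugation by a word on $M\cup M^{-1}$ together with the insertion of finitely many factors of the commutator shape $a_\ell\,\eta(\rho^s(w_\ell^\varepsilon))\,a_\ell^{-1}\,\eta(\rho^s(w_\ell^\varepsilon))^{-1}$. Lemma \ref{lem1} shows that the overall conjugation does not alter the coefficient of $\kappa_{c_1}\cdots\kappa_{c_u}$ modulo $\Delta$, while Lemma \ref{lem2} shows that each commutator factor contributes $0$ to that coefficient modulo $\Delta$; together they yield $\mu(p;c_1,\ldots,c_u,i)\equiv\mu(p';c_1,\ldots,c_u,i)$ and hence the equality of $\bar{\mu}$. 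The main obstacle is exactly this assembling step: since the conjugating words $\rho^{r}(B^\delta)$ are inserted at many different depths and are themselves nested, one must argue carefully---presumably by induction on the depth $q$, telescoping the conjugations from the leaves of the forests $F_i^q$ toward the roots---that the cumulative discrepancy really does collapse to a single conjugation plus commutator terms of the form covered by Lemma \ref{lem2}. Tracking the $\eta$-indices and the signs $\varepsilon$ through this telescoping, and checking compatibility of the depth-$q$ truncation with the inductive hypothesis on shorter sequences, is where the real work lies.
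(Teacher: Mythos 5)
There is a genuine gap, and it sits in your case reduction. You claim that when the shifted letter $B$ has sign $0$ in $w_\ell$ and its two occurrences lie in distinct components, ``moving $B^0=\emptyset$ inside $w_\ell$ changes no signed truncation,'' so that this case is as harmless as an H1 move. This is false. The truncation $x_{ij}$ attached to the \emph{partner} occurrence of $B$ (the one in the other component) is by definition the prefix of $w_\ell^\varepsilon$ ending just before the \emph{position} of $B$ in $w_\ell$, and that position is exactly what the shift changes: before the shift the truncation is empty, after the shift it is essentially all of ${w'_\ell}^\varepsilon$. Consequently every expanding word $\rho^q(A_{ij}^{\varepsilon_{ij}})$ involving that partner letter acquires a new conjugation by $\rho^{q-1}({w'_\ell}^\varepsilon)$, the forests gain descendants at every vertex labeled $B$, and $\mu$ itself is in general \emph{not} preserved --- only $\bar\mu$ is. This is precisely the paper's Case 2, and it is the case that genuinely requires Milnor's commutator identity (Lemma \ref{lem2}): one must show that the extra subforests hanging below the $B$-vertices contribute a coefficient congruent to that of $\kappa_{b_1}\cdots\kappa_{b_t}$ in $\varphi(a_\ell^{\pm})$, hence $0$, modulo $\Delta$. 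By declaring this case trivial you have discarded the hardest part of the proposition.

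For the case you do address ($\delta=\pm1$, occurrences in distinct components), your local analysis of how truncations lose their leading $B^{\delta}$ and how the top-level cyclic rotation is absorbed by Lemma \ref{lem1} when $\ell=i$ matches the paper's Cases 3-2 and 3-3. But your proposed mechanism for the subcase $i\neq \ell$ --- telescoping the nested conjugations into a single conjugation plus commutator factors covered by Lemma \ref{lem2} --- is asserted rather than carried out, and you yourself flag it as the obstacle. The paper does something different and more combinatorial here: it shows that the subforests of $F_i^q$ passing through a vertex labeled $A$ contribute, for each shorter index sequence $(b_1,\ldots,b_r)$, an integer multiple $\delta_{b_1,\ldots,b_r}\cdot\mu(p;b_1,\ldots,b_r,i)$, which is divisible by $\Delta(p;c_1,\ldots,c_u,i)$ directly from the definition of $\Delta$ --- no appeal to Lemma \ref{lem2} is needed in that branch. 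So even where your outline is pointed in the right direction, the key counting argument that makes the discrepancy vanish modulo $\Delta$ is missing.
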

\begin{proof}
Suppose that $p$ and $p'$ are related by the shift move on the $k$th component.
That is $p=w_1| \cdots |w_{k-1}|Ax|w_{k+1}| \cdots |w_n$ and $p'=w_1| \cdots |w_{k-1}|xA|w_{k+1}| \cdots |w_n$,
where $A$ is a letter in $\mathcal{A}$.
Furthermore, let us write $|A|_p$ for $|A|$ in $p$ and $|A|_{p'}$ for $|A|$ in $p'$.
If $x$ contains the letter $A$, then $|A|_{p'}$ equals $\nu_v (|A|_p)$.
Otherwise, $|A|_{p'}$ equals $|A|_p$. 
We need to consider the following three cases. 
\begin{description}
\item[{\rm Case 1}] $x$ contains the letter $A$ 
\item[{\rm Case 2}] $w_1\cdots w_{k-1}$ contains the letter $A$ and $|A| = b+$ or $a-$ \\
         (or $w_{k+1}\cdots w_n$ contains the letter $A$ and $|A| = a+$ or $b-$) 
\item[{\rm Case 3}] $w_1\cdots w_{k-1}$ contains the letter $A$ and $|A| = a+$ or $b-$ \\
         (or $w_{k+1}\cdots w_n$ contains the letter $A$ and $|A| = b+$ or $a-$) 
\end{description}  

Case 1: 
As in the proof of invariance under H1 move, $\rho ^q(w_i^\varepsilon)$ is equal to $\rho ^q({w'_i}^\varepsilon)$ and so $\mu(p,c_1,...,c_{q-1},i)$ is equal to $\mu(p',c_1,...,c_{q-1},i)$. 

Case 2: 
Let $|A|=b+$. 
(We can show the other cases in the same way.)
Let $k$ be the order of the component in which the shifted $A$ occurs and 
let $h$ the order of the component in which the other $A$ occurs. 
We divide the proof into the following two cases. 
\begin{description}
\item[{\rm Case 2-1}] $i = k$
\item[{\rm Case 2-2}] $i \neq k$ 
\end{description}

Case 2-1:
Consider the subgraph of ${F'_k}^q$ obtained  by deleting descendants of vertices labeled $A$, and denote it by $F_A'$. 
If we ignore the first label, then ${F}_k^q$ and $F_A'$ are isomorphic as forests.
Since $i=k$, $k \notin \{c_1,...c_{q-1} \}$.
Moreover $\eta (A) = a_k$.
Thus we actually have the fact that the cardinality of $\mathcal{S}_\ast({F'_k}^q,c_1, \ldots,c_{q-1})$ is equal to the cardinality of $\mathcal{S}_{\ast}(F'_A, c_1, \ldots,c_{q-1})$ for both $\ast =$ e and $\ast =$ o.
As in the proof of invariance under H2 move, $\mu(p';c_1, \ldots ,c_{q-1},k)$ is equal to $\mu(p;c_1, \ldots ,c_{q-1},k)$. 

Case 2-2:
The associated forest ${F}_i^q$ is obtained from ${F'_i}^q$ by deleting descendants of vertices labeled $A$.
For both $\ast =$ e and $\ast =$ o, we have the fact that $\mathcal{S}_{\ast}({F'_i}^q,c_1, \ldots,c_{q-1})$ is equal to 
\begin{align*}
 \mathcal{S}_{\ast}({F'_i}^q,c_1, \ldots,c_{q-1}, c(A)) \sqcup \mathcal{S}_{\ast}({F'_i}^q,c_1, \ldots,c_{q-1}, \overline{c(A)}),
\end{align*}
where $c(A)$ means the set of children of vertices labeled $A$.
Since $F'_A$ is isomorphic to ${F}_i^q$ as labeled forests,
\begin{align*}
\mathcal{S}_{\ast}({F'_i}^q,c_1, \ldots,c_{q-1}, \overline{c(A)}) &= \mathcal{S}_{\ast}({F}'_A,c_1, \ldots,c_{q-1}) \\
&= \mathcal{S}_{\ast}({F}_i^q,c_1, \ldots,c_{q-1}). 
\end{align*}
Therefore we only need to show that
\begin{align}
\# \mathcal{S}_{\text{e}}({F'_i}^q,c_1, \ldots,c_{q-1}, c(A)) &- \# \mathcal{S}_{\text{o}}({F'_i}^q,c_1, \ldots,c_{q-1}, c(A)) \label{2-ii-2} \\
& \equiv 0 \mod{\Delta (p';c_1, \ldots ,c_{q-1},i)}.  \notag
\end{align}

For any element $G$ in $\mathcal{S}({F'_i}^q,c_1, \ldots,c_{q-1}, c(A))$, 
let $\hat{G}$ denote the subgraph of $G$ obtained by deleting the descendants of the vertex labeled $A$. 
The left hand term of (\ref{2-ii-2}) is equal to 
\begin{align*}
 \sum_{F \in \{ \hat{G} \mid G  \in \mathcal{S}({F'_i}^q,c_1, \ldots,c_{q-1}, c(A)) \} } 
\Bigl( \# \{G & \in \mathcal{S}_{\text{e}}({F'_i}^q,c_1, \ldots,c_{q-1}, c(A)) \mid \hat{G}=F \} \\
 & - \# \{G\in \mathcal{S}_{\text{o}}({F'_i}^q,c_1, \ldots,c_{q-1}, c(A)) \mid \hat{G}=F \} \Bigr). 
\end{align*}
Let us fix $F$.
Let $r$ denote the depth of the vertex of $F$ labeled $A$.
Then we choose an element $G$ in $\mathcal{S}({F'_i}^q,c_1, \ldots,c_{q-1}, c(A))$ such that $\hat{G}$ is $F$.
Let $b_1,b_2,...,b_t$ be the subsequence of $c_1, \ldots,c_{q-1}$ corresponding to the vertex labeled $A$ and its descendants.
Let $s=q-r-1$, then we have 
\begin{align}
&\# \{G \in \mathcal{S}_{\text{e}}({F'_i}^q,c_1, \ldots,c_{q-1}, c(A)) \mid \hat{G}=F \} \notag \\
& \hspace{1.5cm} - \# \{G\in \mathcal{S}_{\text{o}}({F'_i}^q,c_1, \ldots,c_{q-1}, c(A)) \mid\hat{G}=F \}  \notag \\
&  = \varepsilon _F \cdot \Bigl( \# \mathcal{S}_{\text{e}}(T',b_1, \ldots, b_{t}) - \# \mathcal{S}_{\text{o}}(T',b_1, \ldots,b_{t}) \Bigr), \label{F0}
\end{align}
where $T'$ is either ${T'}_{A}^{s+1}$ or the tree obtained by changing the sign of the fourth label of the root in ${T'}_{A}^{s+1}$.
We then consider vertices of $F$ except the vertex labeled $A$.
If the number of those vertices which have $-1$ as the fourth label is even (resp. odd), then let $\varepsilon _F=1$ (resp. $\varepsilon _F=-1$).
By Lemma \ref{lem0}, the right hand term of (\ref{F0}) is equal to
\begin{align}
\varepsilon _F \cdot \Bigl(\text{the coefficient of $\kappa _{b_1} \cdots \kappa _{b_{t}}$ in $\varphi \bigl( \eta ( \rho^{s}(({w'_k}^\varepsilon)^{-1}))a^{\pm}_k \eta ( \rho^{s}({w'_k}^\varepsilon)) \bigr) $} \Bigr).  \label{F}
\end{align}
We know that $r+t-1\leq q-2$ and so $t\leq s$.
By Lemma \ref{lem2}, the coefficient of $\kappa _{b_1} \cdots \kappa _{b_{t}}$ in $\varphi \bigl( \eta ( \rho^{s}(({w'_k}^\varepsilon)^{-1}))a^{\pm}_k \eta ( \rho^{s}({w'_k}^\varepsilon)) \bigr)$ is equal to the coefficient of $\kappa _{b_1} \cdots \kappa _{b_{t}}$ in $\varphi (a_k^\pm )$ modulo $\Delta (p';b_1, b_2, \ldots ,b_t,i)$, and so $\Delta (p';c_1, \ldots ,c_{q-1},i)$.
Since $t\geq 2$ and $b_1,\ldots, b_t$ are pairwise distinct, the coefficient of $\kappa _{b_1} \cdots \kappa _{b_{t}}$ in $\varphi (a_k^\pm )$ is zero.
Therefore the term (\ref{F}) is equal to zero modulo $\Delta (p';c_1, \ldots ,c_{q-1},i)$.

Case 3: 
Let $|A|=a+$.
(We can show the other cases in the same way.)
Let $k$ be the order of the component in which the shifted $A$ occurs 
and let $h$ the order of the component in which the other $A$ occurs. 
We divide the proof into the following three cases. 
\begin{description}
\item[{\rm Case 3-1}] $i = h$ 
\item[{\rm Case 3-2}] $i = k$ 
\item[{\rm Case 3-3}] $i \neq h,k$ 
\end{description}

Case 3-1: 
As in the proof of invariance under H3 move, ${\mu}(p';c_1, \ldots ,c_{q-1},h)$ is equal to ${\mu}(p;c_1, \ldots ,c_{q-1},h)$.

Case 3-2: 
We can represent $\rho^q(w_k^\varepsilon)$ by $\rho^q(A)x$, 
where $x$ is some word on $\mathcal{A} \cup \mathcal{A}^{-1}$.
It follows from Lemma \ref{lem1} that the coefficient of $\kappa _{c_1} \cdots \kappa _{c_{q-1}}$ in $\varphi \circ \eta (\rho^q(A)x)$ 
is equal to that in $\varphi \circ \eta (x\rho^q(A))$ modulo $\Delta(p;c_1, \ldots,c_{q-1},k)$.
Therefore let $\bar{F}_k^q$ denote the forest associated with a word $x\rho^q(A)$, and we have 
\begin{align*}
\mu(p;c_1, \ldots ,c_{q-1},k)
& \equiv \# \mathcal{S}_{\text{e}}(\bar{F}_k^q,c_1, \ldots,c_{q-1}) - \# \mathcal{S}_{\text{o}}(\bar{F}_k^q,c_1, \ldots,c_{q-1}) \\
& \mod{\Delta (p;c_1, \ldots ,c_{q-1},k)}. 
\end{align*}
If we ignore the first labels,
the subgraphs obtained from $\bar{F}_k^q$ and ${F'_k}^q$ by deleting the vertices labeled $k$ and their descendants are isomorphic as labeled forests.
Therefore we have 
\[ \mu(p;c_1, \ldots ,c_{q-1},k) \equiv \mu(p';c_1, \ldots ,c_{q-1},k) \mod{\Delta (p;c_1, \ldots ,c_{q-1},k)}. \]
Thus $\bar{\mu}(p;c_1, \ldots ,c_{q-1},k)$ is equal to $\bar{\mu}(p';c_1, \ldots ,c_{q-1},k)$.

Case 3-3: 
Since ${F'_i}^q$ is isomorphic to the subgraph of ${F}_i^q$ obtained by deleting the vertices labeled $A$ and their descendants, 
as in the proof of Proposition \ref{prop3} case 4-3, we only need to show that  
\begin{align}
\begin{split} 
\# \mathcal{S}_{\text{e}}({F}_i^q,c_1, \ldots,c_{q-1}, A) & - \# \mathcal{S}_{\text{o}}({F}_i^q,c_1, \ldots,c_{q-1}, A)  \\
 & \equiv 0 \mod \Delta (p;c_1,...c_{q-1},i) \label{(A,a_h)2}  
\end{split}
\end{align}
Since $i\neq k$, vertices labeled $A$ have a vertex labeled $k$ as their parent.
Denote by $\mathcal{C}$ the set of all sequences obtained from $c_1, \ldots, c_{q-1}$ by deleting $h$ and possibly some other indices.
Let $A_\ast(b_1,...,b_r) $ denote 
\[ \{G \in \mathcal{S}_{\ast}({F}_i^q,c_1, \ldots,c_{q-1},A) \mid G \setminus \cup{T_A} \in \mathcal{S}({F}_i^q,b_1, \ldots,b_{r}) \} \] 
for both $\ast =$ e and $\ast =$ o.
Then we have
\begin{align}
\mathcal{S}_{\ast}({F}_i^q,c_1, \ldots,c_{q-1}, A) = \bigsqcup_{(b_1,...,b_r)\in \mathcal{C}} A_\ast(b_1,...,b_r). \notag
\end{align}
We will momentarily show that 
\begin{align}
A_\ast(b_1,...,b_r) &= \delta _{b_1,...,b_r} \cdot \Bigl( \# \mathcal{S}_{\text{e}}({F}_i^q,b_1, \ldots,b_{r}) - \# \mathcal{S}_{\text{o}}({F}_i^q,b_1, \ldots,b_{r}) \Bigr) \label{2.11} \\
&= \delta _{b_1,...,b_r} \cdot \mu(p;b_1, \ldots ,b_r,i), \notag
\end{align}
where $\delta _{b_1,...,b_r}$ is an integer defined using the sequence $b_1,...,b_r$ (see below). 
Therefore since for any $(b_1,...,b_r)$ in $\mathcal{C}$,  $\mu(p;b_1, \ldots ,b_r,i)$ is divisible by $\Delta (p;c_1,...c_{q-1},i)$, Equation (\ref{(A,a_h)2}) is proved.

We now prove Equation (\ref{2.11}).
Let $B_\ast(b_1,...,b_r)$ denote $\mathcal{S}_{\ast}({F}_i^q,b_1, \ldots,b_{r})$ for both $\ast =$ e and $\ast =$ o.
If $A_\text{e} \sqcup A_\text{o} = \emptyset $, then we set $\delta _{b_1,...,b_r}= 0$. 
If not, we define a map $\theta : A_\text{e} \sqcup A_\text{o}  \longrightarrow B_\text{e} \sqcup B_\text{o} $ by $\theta (g) = \hat{g} $ for any $g \in A_\text{e} \sqcup A_\text{o} $.
Then $\theta $ is surjective. 
In fact, if there exists $h$ in $B_\text{e} \sqcup B_\text{o} $ such that for any $g$ in $A_\text{e} \sqcup A_\text{o}$ the image of $g$ under $\theta$ is not equal to $h$, then $A_\text{e} \sqcup A_\text{o} = \emptyset $.
In addition, for any $e \in B_{\ast} $, 
$\# (\theta ^{-1}(e) \cap A_{\ast} )$ has the same value, denoted by $m_{\ast}$, for both $\ast =$ e and $\ast =$ o.
Moreover for any $e \in B_\text{e}$ and $e' \in B_\text{o} $, 
\begin{eqnarray*}
\# (\theta ^{-1}(e) \cap A_\text{e} ) = \# (\theta ^{-1}(e') \cap A_\text{o} ) = m_\text{e} \\
\# (\theta ^{-1}(e) \cap A_\text{o} ) = \# (\theta ^{-1}(e') \cap A_\text{e} ) = m_\text{o}.
\end{eqnarray*}
\begin{eqnarray*}
A_\text{e} &=& \bigsqcup_{e \in B_\text{e}} (\theta ^{-1}(e) \cap A_\text{e} ) \bigsqcup_{e' \in B_\text{o}} (\theta ^{-1}(e') \cap A_\text{e} ), \\
A_\text{o} &=& \bigsqcup_{e \in B_\text{e}} (\theta ^{-1}(e) \cap A_\text{o} ) \bigsqcup_{e' \in B_\text{o}} (\theta ^{-1}(e') \cap A_\text{o} )
\end{eqnarray*}
show
\begin{eqnarray*}
\#A_\text{e} &=& \#B_\text{e} m _\text{e} + \#B_\text{o} m _\text{o}, \\
\#A_\text{o} &=& \#B_\text{e} m _\text{o} + \#B_\text{o} m _\text{e}.
\end{eqnarray*}
Therefore
\begin{eqnarray*}
\#A_\text{e} - \#A_\text{o} 
&=& (\#B_\text{e} m _\text{e} + \#B_\text{o} m _\text{o}) - ( \#B_\text{e} m _\text{o} + \#B_\text{o} m _\text{e} ) \\
&=& (m _\text{e} - m _\text{o})(\#B_\text{e} - \#B_\text{o}).
\end{eqnarray*}
Hence we may set $\delta _{b_1,...,b_r}$ as $m _\text{e} - m _\text{o}$.
\end{proof}

\begin{remark}
The shift move on nanophrases corresponds to a change of base point on links.  
Therefore the invariance under the shift move corresponds to (12), (13) in Theorem 5 in \cite{MR0092150}.
\end{remark}

\section{Welded links}
Fix a finite set $\alpha $ and choose an element which is not contained in $\alpha$, which we denote by $\emptyset$.
Let $\bar\alpha $ be the union of $\alpha $ and $\{\emptyset\}$.
Then we extend a projection from $\mathcal{A}$ to $\alpha $ to a map from the union of $\mathcal{A}$ and the empty word $\emptyset$ to $\bar\alpha $ which is defined by $|\emptyset|=\emptyset$.
Let $S'$ be a subset of $\bar\alpha \times \bar\alpha \times \bar\alpha$.  
We also call the triple $(\alpha , \tau , S')$ a {\it homotopy} data.

Fixing $\alpha$ and $S'$, we define an {\it extended H3 move} on nanophrases over $\alpha$ as follows.
The move is

\hspace{1.5em} Extended H3 move : if $(|A|, |B|, |C|) \in S'$,

\hspace{6.5em}$(\mathcal{A}, xAByACzBCt) \longleftrightarrow (\mathcal{A}, xBAyCAzCBt)$. 

We define an extended $M$-homotopy to be the equivalence relation of nanophrases over $\alpha $ generated by isomorphisms, 
the three usual homotopy moves H1 - H3 together with the extended H3 move with respect to $(\alpha ,\tau ,S')$,  
self crossing moves with respect to $\sigma$ and shift moves with respect to $\nu$.

Here we recall that two virtual link diagrams are said to be {\it welded equivalent} if one may be transformed into the other by a sequence of generalized Reidemeister moves and upper forbidden moves in Fig.\ \ref{fig:8}.
A {\it welded link} can then be defined to be a welded equivalence class of virtual link diagrams. 

We below show that there exists a homotopy data $(\alpha, \tau, S')$ and $\nu$ corresponding to the welded equivalence relation.

\begin{figure}[H]
\begin{center}
\includegraphics[scale=0.4, angle=0]{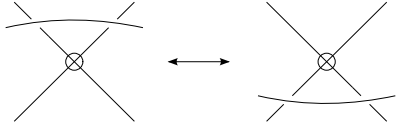}
\end{center}
\caption{Upper forbidden move} \label{fig:8}
\end{figure}

\begin{theorem}
Let $\alpha_{v} $ be the set $\{a_+,a_-,b_+,b_-\}$ and $\tau _{v} $ the involution on $\alpha_{v} $ which sends $a_+$ to $b_-$ and $a_-$ to $b_+$.
Let $\nu_{v} $ be the involution on $\alpha_{v} $ which sends $a_+$ to $b_+$ and $a_-$ to $b_-$.
Let $S'_{w} $ be the set 

\hspace{5.5em}$S'_{w}  = \left\{
\begin{array}{l}
{(a_\pm,a_\pm,a_\pm),(a_\pm,a_\pm,a_\mp),(a_\pm,a_\mp,a_\mp), }\\  
   {(b_\pm,b_\pm,b_\pm),(b_\pm,b_\pm,b_\mp),(b_\pm,b_\mp,b_\mp), } \\  
  {(a_+,a_+,\emptyset),(a_+,b_-,\emptyset),(b_-,a_+,\emptyset),(b_-,b_-,\emptyset),}\\  
   {(a_-,\emptyset,b_-),(b_+,\emptyset,a_+),(a_-,\emptyset,a_+),(b_+,\emptyset,b_-),}\\  
   {(\emptyset,a_-,a_-),(\emptyset,a_-,b_+),(\emptyset,b_+,a_-),(\emptyset,b_+,b_+)}  
\end{array}
\right\}$. \\
Under the homotopy defined by $(\alpha_{v}, \tau_{v}, S'_{w})$ and $\nu_v$, the set of homotopy classes of nanophrases over $\alpha_{v}$ is in a bijective correspondence with the set of ordered welded links. 
\end{theorem}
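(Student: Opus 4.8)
The plan is to deduce the statement from Turaev's Theorem \ref{e.thm}(ii) by isolating precisely the difference between the virtual and the welded theories. First I would record the algebraic relation between the two data sets: expanding the $\pm$ notation in the first three families of $S'_w$ reproduces $S_v$ triple for triple, so that $S_v = S'_w \cap (\alpha_v \times \alpha_v \times \alpha_v)$, while every remaining triple of $S'_w$ has exactly one entry equal to $\emptyset$. Consequently the homotopy determined by $(\alpha_v, \tau_v, S'_w)$ and $\nu_v$ is the equivalence relation generated by the homotopy determined by $(\alpha_v, \tau_v, S_v)$ and $\nu_v$ together with the extended H3 moves coming from the $\emptyset$-triples. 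Writing $\sim_v$ and $\sim_w$ for the two relations, the identity map on nanophrases induces a surjection from $\sim_v$-classes onto the coarser $\sim_w$-classes, and by Theorem \ref{e.thm}(ii) the latter are in bijection with the quotient of the set of ordered virtual links by the relation generated by the geometric moves matching the $\emptyset$-triples.

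The heart of the argument is to identify those geometric moves. I would interpret an extended H3 move whose triple has $|C| = \emptyset$ by substituting the empty word for $C$: since neither occurrence of $C$ is a recorded letter, the move $(\mathcal{A}, xAByACzBCt) \leftrightarrow (\mathcal{A}, xBAyCAzCBt)$ collapses to $(\mathcal{A}, xAByAzBt) \leftrightarrow (\mathcal{A}, xBAyAzBt)$, that is, a transposition of the adjacent pair $AB$ mediated by a third, virtual strand. Reading off the corresponding local picture under Turaev's correspondence, this is exactly the effect on a pointed ordered diagram of pushing the over-strand of the real crossings $A, B$ across the virtual crossing recorded by the $\emptyset$, i.e. the upper forbidden move of Figure \ref{fig:8}. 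Repeating this for $\emptyset$ in the first and in the second slot accounts for the three families of $\emptyset$-triples, which correspond to the three positions that the virtual strand may occupy in the forbidden configuration. I would then check, type by type, that the sign patterns listed in each family are precisely those realized by the orientation variants of the upper forbidden move, and, crucially, that the patterns which would force the under-strand to be slid (the lower forbidden move, whose adjunction collapses the theory) are absent; this is what singles out the welded, rather than the fully forbidden, equivalence.

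Finally I would assemble the pieces. By definition an ordered welded link is an ordered virtual link diagram taken up to generalized Reidemeister moves and the upper forbidden move, so the set of ordered welded links is the quotient of the set of ordered virtual links by the upper forbidden move. The previous paragraph identifies this with exactly the quotient appearing at the end of the first paragraph, so Turaev's bijection descends to the respective quotients and yields the desired bijective correspondence between $\sim_w$-homotopy classes of nanophrases over $\alpha_v$ and ordered welded links. I expect the middle step to be the main obstacle: verifying that the prescribed sign data in $S'_w$ reproduces precisely the oriented upper forbidden move and nothing more requires a careful local analysis of the over/under and orientation information encoded by $a_\pm, b_\pm$ together with the geometric meaning of the $\emptyset$ slot. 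Once that dictionary is in place, everything else is bookkeeping layered on top of Theorem \ref{e.thm}.
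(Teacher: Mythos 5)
Your proposal follows essentially the same route as the paper: both reduce to Turaev's Theorem \ref{e.thm}(ii), observe that $S'_w$ differs from $S_v$ exactly by the $\emptyset$-triples, and establish (by a case analysis over orientations and traversal orders of the three arcs) that the corresponding extended H3 moves are precisely the oriented variants of the upper forbidden move, so that the bijection descends to the quotients. The quotient-relation packaging is yours, but the mathematical content and the key verification you flag as the main obstacle are exactly what the paper carries out.
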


\begin{proof}
By Theorem \ref{e.thm}, the set of homotopy classes of nanophrases over $\alpha_{v} $ under the homotopy with respect to $(\alpha_{v}  ,\tau_{v}  ,S_{v})$ and $\nu_v$ is in a bijective correspondence with the set of virtual links. 
Consider the upper forbidden move. 
It suffices to consider the case when the orientation of the three arcs are as illustrated in Fig.\ \ref{fig:5}.
(The deformations involving other orientations of the arcs can be obtained as compositions of this one with isotopy and local deformations of the second Reidemeister moves.)  
There are 6 cases to consider depending on the order in which one traverses the three arcs involved.
Let the order of arcs be as illustrated in Fig.\ \ref{fig:5}.
Then this move transforms the associated phrase from $xAyBzABt$ to $xAyBzBAt$, where $x, y, z$ and $t$ are words not including the letters $A$ and $B$, and $|A|=|B|=a_-$.
Conversely if two nanophrases are represented by $xAyBzABt$ and $xAyBzBAt$, the associated virtual link diagrams are related by upper forbidden moves.
We can show the other cases in the same way.
Therefore under the homotopy defined by $(\alpha_{v}  ,\tau_{v}  ,S'_w)$ and $\nu_v$, the set of homotopy classes of nanophrases over $\alpha_{v}$ is in a bijective correspondence with ordered welded links. 
\end{proof}

\begin{figure}[H]
\begin{center}
\includegraphics[scale=0.4]{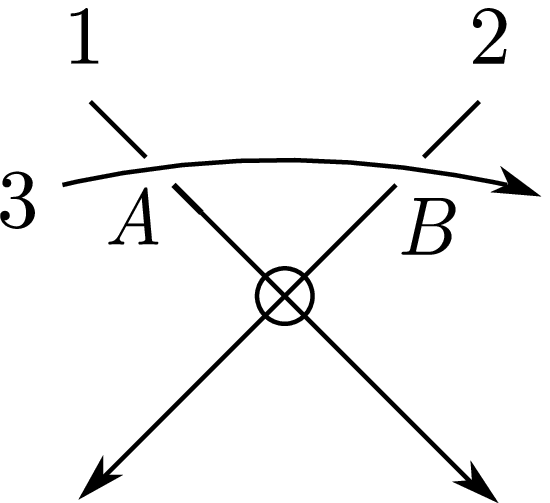}
\end{center}
\caption{} \label{fig:5}
\end{figure}

\begin{theorem} \label{them2}
Let $p$ be an $n$-component nanophrase.
Let $c_1, c_2, \ldots ,c_u,i$ be a sequence of integers between $1$ and $n$ such that $c_1, c_2, \ldots ,c_u,i$ are pairwise distinct.
Then $\bar{\mu} (p; c_1, c_2, \ldots ,c_u,i)$ 
is an invariant under $M$-homotopy of nanophrases with respect to $(\alpha_v, \tau_v, S'_w)$ associated with upper forbidden moves, $\nu_v$ and $\sigma_v$.
\end{theorem}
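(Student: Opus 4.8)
The plan is to reduce the statement to the only feature that separates the present $M$-homotopy from the one treated in Theorem~\ref{m.them}. The moves generating the $M$-homotopy with respect to $(\alpha_v,\tau_v,S'_w)$, $\nu_v$ and $\sigma_v$ are: isomorphisms, the moves H1 and H2, the ordinary and extended H3 moves whose triple lies in $S'_w$, self crossing moves, and shift moves. The first observation is that the triples of $S'_w$ having no entry equal to $\emptyset$ are exactly the twelve triples of $S_v$. Hence the ordinary H3 moves available here are precisely those of Theorem~\ref{m.them}, and Propositions~\ref{prop1}, \ref{prop2}, \ref{prop3}, \ref{prop5} and \ref{prop4} already give the invariance of $\bar\mu(p;c_1,\ldots,c_u,i)$ under H1, H2, the ordinary H3, self crossing and shift moves; invariance under isomorphisms is immediate. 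It therefore only remains to prove invariance under the extended H3 moves whose triple contains $\emptyset$, that is, under the upper forbidden moves.

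I would first make such a move explicit. When one of the three letters $A,B,C$ is the empty word $\emptyset$, so that the corresponding entry of the triple is $\emptyset$, the pattern $xAByACzBCt\leftrightarrow xBAyCAzCBt$ collapses to one of $xByCzBCt\leftrightarrow xByCzCBt$, $xAyACzCt\leftrightarrow xAyCAzCt$ or $xAByAzBt\leftrightarrow xBAyAzBt$, according as the empty letter sits in the first, second or third position. In each shape exactly two genuine crossings are transposed while the remaining crossing of the triangle is virtual and records no letter; by the analysis in the proof of the welded-equivalence theorem above, these are exactly the six forms of the upper forbidden move, and all of them are treated in the same way.

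The crux is then a single remark, which makes the argument as short as Case~2 of Proposition~\ref{prop3}. Recall that a letter projecting to $a_-$ or $b_+$ receives the sign $0$ on its later occurrence, a letter projecting to $a_+$ or $b_-$ receives the sign $0$ on its earlier occurrence, and a self crossing always carries the sign $0$. Checking the three $\emptyset$-families of $S'_w$ against the rule defining $\varepsilon_{ij}$ shows that, in every case, the two crossings that get transposed are exactly the two whose occurrence at that place carries the sign $0$. Consequently these two letters do not appear in any signed component $w_i^\varepsilon$, being dropped there as $A^0=\emptyset$; and since they are adjacent and the truncations $x_{ij}$ forming the expanding words are prefixes of the unchanged signed components, those prefixes are unaffected by the transposition as well. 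Hence $\rho^q(w_i^\varepsilon)$ is literally the same for $p$ and $p'$, for every $i$ and every $q$, so $\mu(p;c_1,\ldots,c_u,i)=\mu(p';c_1,\ldots,c_u,i)$ and a fortiori $\bar\mu(p;c_1,\ldots,c_u,i)=\bar\mu(p';c_1,\ldots,c_u,i)$.

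The main point demanding care is exactly the sign bookkeeping of the previous paragraph: one must verify, uniformly over the six orderings in which the three arcs may be traversed and over every projection assignment allowed by $S'_w$, that the transposed crossings do carry the sign $0$, including the degenerate situations in which the triangle collapses so that a crossing becomes a self crossing (where the sign is $0$ automatically). Once this is established there is nothing more to do. I expect the only genuine subtlety to be making this verification clean rather than case-heavy; in particular, in contrast with Case~4-3 of Proposition~\ref{prop3} and with the shift move of Proposition~\ref{prop4}, no actual rearrangement of expanding words takes place and no reduction modulo $\Delta$ is required, because the signed components themselves are preserved.
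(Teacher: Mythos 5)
Your proposal is correct, and its overall architecture coincides with the paper's: both reduce the statement to Theorem \ref{m.them} plus the single residual case of an extended H3 move whose triple contains $\emptyset$ (the upper forbidden moves), after observing that the $\emptyset$-free triples of $S'_w$ are exactly $S_v$. Where you diverge is in how that residual case is settled. The paper picks the representative case $|A|=|B|=b_-$, $|C|=\emptyset$, notes $\eta(A)=\eta(B)$, and defers to ``the same arguments as in the proof of Proposition \ref{prop3}'', i.e.\ to the forest machinery developed for H3 moves. You instead verify directly, across all three $\emptyset$-positions and all projection values permitted by $S'_w$, that the two transposed occurrences always carry the sign $0$ (first occurrence for $a_+,b_-$; second occurrence for $a_-,b_+$; either occurrence for a letter appearing twice in one component), so that the signed components $w_i^\varepsilon$ and all truncations $x_{ij}$ --- read as words on $\mathcal{A}\cup\mathcal{A}^{-1}$ after discarding $A^0=\emptyset$ --- are literally unchanged, whence $\rho^q(w_i^\varepsilon)$ and hence $\mu$ itself are preserved exactly. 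This is closer in spirit to Case 2 of Proposition \ref{prop2}/\ref{prop3} than to the elaborate Cases 3--4, and it buys a self-contained, machinery-free argument with the stronger conclusion $\mu(p;\cdot)=\mu(p';\cdot)$ on the nose (no reduction modulo $\Delta$, no appeal to $\eta(A)=\eta(B)$); the cost is the explicit case check over the twelve $\emptyset$-triples, which you have carried out correctly. The only phrasing to tighten: the truncations do change as sequences of signed letters (the $A^0$, $B^0$ swap, and the prefix length for the other occurrences of $A$ and $B$ shifts by one), and it is only after the convention $A^0=\emptyset$ that they are unchanged --- but you say as much, so this is cosmetic rather than a gap.
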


\begin{proof}
By the proof of Theorem \ref{m.them}, it suffices to show that $\mu (p;c_1, \ldots ,c_{u},i)=\mu (p';c_1, \ldots ,c_{u},i)$ if $p$ and $p'$ are related by an extended H3 move corresponding to an upper forbidden move. 
Let us $p=xAByACzBCt$ and $p'=xBAyCAzBCt$, where $(|A|,|B|,|C|)$ is an element of $S'_w$.
Moreover let us $|A|=|B|=b_-$ and $|C|=\emptyset$. 
That is $p=xAByAzBt$ and $p'=xBAyAzBt$, where $|A|=|B|=b_-$.
(We can show the other cases in the same way.)
Since $\eta(A)=\eta(B)$, Theorem \ref{them2} follows from the same arguments as in proof of Proposition \ref{prop3}. 
\end{proof}

Note that two nanowords are always equivalent under $M$-homotopy defined by $(\alpha_{v}  ,\tau_{v}  ,S'_w)$, $\nu_v$ and $\sigma _v$ associated with the welded equivalence relation.   

\section*{Acknowledgement}
The author thanks Professor Sadayoshi Kojima for his valuable
suggestions and comments.
She would like to thank Professor Michael Polyak for his correspondence regarding his result and giving many valuable comments. 
Also, she would like to thank Dr.\ Tomonori Fukunaga for many helpful comments. 
Finally, she would like to thank Professor Tamas Kalman for many valuable comments. 
\bibliography{kotorii}
\bibliographystyle{amsplain}

\end{document}